\newtheorem{thm}{Theorem}[section]
\newtheorem{cor}[thm]{Corollary}
\newtheorem{lem}[thm]{Lemma}
\theoremstyle{definition}
\newtheorem{rem}[thm]{Remark}
\newtheorem{defn}[thm]{Definition}
      \newcommand{\N}{{\mathbb N}}
      \newcommand{\R}{{\mathbb R}}
\newcommand{\Q}{{\mathbb Q}}
\newcommand{\supp}{\operatorname{supp}}
             \newcommand{\Lip}{\operatorname{Lip}}
              \newcommand{\lip}{\operatorname{lip}}
\newcommand{\ap}{\text{ap}}
\newcommand{\veps}{\varepsilon}
\newcommand{\jint}{\int\hspace{-4.15mm}\frac{\,\,\,}{}}
\newcommand{\Leb}[1]{{\mathscr L}^{#1}}
\def\Xint#1{\mathchoice
{\XXint\displaystyle\textstyle{#1}}%
{\XXint\textstyle\scriptstyle{#1}}%
{\XXint\scriptstyle\scriptscriptstyle{#1}}%
{\XXint\scriptscriptstyle\scriptscriptstyle{#1}}%
\!\int}
\def\XXint#1#2#3{{\setbox0=\hbox{$#1{#2#3}{\int}$}
\vcenter{\hbox{$#2#3$}}\kern-.5\wd0}}
\def\dashint{\Xint-}
\begin{document}
\title[On characterization of
approximate differentiability on metric spaces]{On Whitney-type characterization
of \\ approximate differentiability \\
on metric measure spaces}
\author{E. Durand-Cartagena, L. Ihnatsyeva, R. Korte, M. Szuma\'nska}
\address{E. D.-C.: Departamento de Matemática Aplicada. ETSI Industriales, UNED c/Juan del Rosal 12
Ciudad Universitaria, 28040 Madrid, Spain}
\email{edurand@ind.uned.es}
\address{L. I.: Department of Mathematics and Statistics, P.O. Box 68, FI-00014 University of Helsinki, Finland}
\email{lizaveta.ihnatsyeva@helsinki.fi}
\address{R. K.: Department of Mathematics and Statistics, P.O. Box 68, FI-00014 University of Helsinki, Finland}
\email{riikka.korte@helsinki.fi}
\address{M. S.: Faculty of Mathematics, Informatics, and Mechanics University of Warsaw, Banacha 2, 02-097 Warszawa, Poland}
\email{m.szumanska@mimuw.edu.pl}
\date{}
\keywords{approximate differentiability, metric space, strong
measurable differentiable structure, Whitney theorem}
\subjclass[2010]{26B05, 28A15, 28A75, 46E35}
\maketitle

\begin {abstract}
We study approximately differentiable functions on metric measure spaces admitting a Cheeger differentiable structure. The main result is a Whitney-type characterization of approximately differentiable functions in this setting. 
As an application, we prove a Stepanov-type theorem and consider approximate differentiability of Sobolev, $BV$ and maximal functions.
\end {abstract}

\bigskip
A classical theorem of Luzin states that a measurable function which is finite almost everywhere coincides with a continuous function outside a set of arbitrary small measure.
A function with such a property is said to satisfy the Luzin property of order zero. The reverse implication in Luzin theorem also holds true and thus the
Luzin property actually characterizes measurable functions. By the aid of Lebesgue differentiation theorem, one can see
that a function defined on $\R^n$ has the Luzin property of order zero if and only if it is approximately continuous almost everywhere.
This characterization is known as Denjoy-Luzin theorem, see \cite{De,Lu}.

For more regular functions, it is natural to expect Luzin properties of higher order.
Indeed, Whitney \cite{Wh} proved that approximately differentiable functions are precisely the functions that have the Luzin property
of order one, in the sense that they are smooth on ``nearly'' all of their domain.

The concept of approximate continuity makes perfect sense for functions defined on arbitrary metric measure spaces.
The same reasoning as in the Euclidean case shows that Denjoy-Luzin theorem holds true for metric spaces equipped with a doubling measure,
see Theorem \ref{zeroLuzin}.
Our aim is to extend the Whitney theorem to a more general setting.
Recently, there has been an intensive research, where a first order differential calculus has been developed on metric measure spaces.
For a general introduction to the subject, we mention here the survey works by Heinonen \cite{He,He2}, Heinonen-Koskela \cite{HeKo}, Ambrosio-Tilli \cite{AT},
Haj{\l}asz-Koskela \cite{HKo}, Semmes \cite{Se} and Bj\"orn-Bj\"orn \cite{BB}.
The standard assumptions, which allow the first order differential calculus, include that
the measure is doubling and that the space supports a $p$-Poincar\'e inequality.

Cheeger \cite{Ch}  constructed a measurable differentiable structure for the above mentioned class of metric spaces (see also Keith \cite{Ke1})
in such a way that Lipschitz functions can be differentiated almost everywhere with respect to this differentiable structure (Rademacher's theorem).
Cheeger's differentiable structure provides a means to study approximate differentiability in metric measure spaces.
The concept of approximate differentiability in this setting has been already considered by Keith \cite{Ke2} and by Bate and Speight \cite{BS}. See also Basalaev and Vodopyanov \cite{BV} for the study of approximate differentiability and Whitney-type theorems in the sub-Riemannian setting.

In this work, we consider the class of approximately differentiable functions in spaces that admit a Cheeger differentiable structure.
The main result of this paper, Theorem \ref{main}, is a Whitney-type characterization of approximate differentiability in the metric setting.
The Whitney theorem has its own interest as a classical result of real analysis, but the characterization of Luzin property has also been used,
for example, to prove regularity properties of different function spaces, especially when differentiability is not always guaranteed.
This is the case for example for Sobolev and $BV$ functions.
For approximate differentiability properties of Sobolev and $BV$ functions in the Euclidean case, one can consult \cite{EG}.

We apply our main result in three different directions.
The first one is related to Stepanov theorem, and the other two are in connection with differentiability properties of
Sobolev functions and the Hardy-Littlewood maximal function.


The Stepanov theorem \cite{St} states that a function is differentiable precisely on the set of points where a certain local growth condition holds.
We prove  an approximate version of Stepanov theorem (Theorem \ref{StepanovEqivalLuzin})
in the metric setting and use it to give another characterization of approximate differentiability (Corollary \ref{StepanovTypeCharact})
and an alternative proof for Stepanov theorem proved by Balogh-Rogovin-Z\"urcher  \cite{BRZ}.
Our methods follow the lines of proof of the classical approximate Stepanov theorem in \cite{F}.

The obtained characterizations allow us to give a simple proof of the approximate differentiability for Sobolev functions (Haj{\l}asz-Sobolev spaces
\cite{H} and Newtonian spaces \cite{Sh}) and $BV$ functions (Miranda \cite{M}) in the metric setting, see Corollary \ref{sobolev}.
Notice that approximate differentiability properties can be also deduced from existing results. See Bj\"{o}rn \cite{Bj} and Ranjbar-Motlagh \cite{R}.

To finish, we use the Whitney type characterization to show in Theorem \ref{maxdiff} that the notion of approximate
differentiability in metric spaces is preserved under the action of the discrete maximal operator.
The analogous statement for the regular Hardy-Littlewood maximal operator in the Euclidean setting was proved earlier by Haj{\l}asz-Mal\'y  in \cite{HM}.  Buckley \cite{Bu} has shown that for a metric space with a doubling measure,
the maximal operator may not preserve Lipschitz and H\"older spaces. Therefore some Lipschitz-type estimates that were used to prove the approximate continuity in Euclidean spaces do not hold in more general spaces. In order to have a maximal function which preserves, for example, the Sobolev spaces on metric spaces, Kinnunen and Latvala \cite{KL}
used the discrete maximal function.
Notice that in many applications Hardy-Littlewood maximal operator can be replaced by discrete maximal operator as they are comparable by two-sided estimates \cite{KL} . 

Luzin properties of order $k$ for $k>1$ have been studied by Bojarski \cite{Bo}, Liu \cite{Li}, Liu-Tai \cite{LT1}, \cite{LT2} in the Euclidean setting.
See also \cite{EG}. In this paper, we only consider Luzin properties of order $1$, since the theory for higher order derivatives has not been developed yet in the metric setting. However, it would be interesting to extend these results to higher order cases at least for lower dimensional subsets of $\R^{n}$.

The paper is organized as follows. In Section \ref{preli}, we first briefly recall the concepts of approximate continuity
and approximate differentiability in the Euclidean setting. After that we give some standard notation and relevant notions regarding metric
spaces supporting a doubling measure that enable us to define approximate differentiability in this more general context.
Section \ref{Characterization}  contains the main result of this paper: a Whitney-type characterization of approximately differentiable
functions in this setting as well as an Stepanov-type characterization.
In Section \ref{differentiability}, we use the obtained characterizations to show the approximate differentiability for Sobolev and $BV$ functions.
In the final Section \ref{maxim}, we prove that approximate differentiability a.e. is preserved under the action of the discrete maximal operator.

\section{Preliminaries}\label{preli}

\subsection{Approximate differentiability in $\R^n$}

We say that $l\in \mathbb{R}$ is the \em approximate limit \em of a function $f:\R^n\to\R$ as $y\rightarrow x$,
and write
$$
\ap\lim_{y\rightarrow x} f(y)=l,
$$
if for every $\varepsilon>0$,
$x\in\R^n$ is a density point for the set $\{y:|f(y)-l|)<\varepsilon\}$.

Observe that equivalently we can formulate the definition in the following way. There exists $A\subset \R^n$ with $x$ a point of density for $A$ such that
\[
\lim_{\substack{y\to x\\ y\in A}} |f(y)-l|=0.
\]
If the approximate limit $l$ exists and $f(x)=l$, then we say that $f$ is {\em approximately continuous} at $x$.

Using the notion of approximate limit one can define the approximate differential.
\begin{defn}
Let $E\subset\R^n$ and $f:E\to\R$.  We say that $f$ is \em approximately differentiable \em at $x\in E$ if there exists a vector
$L=(L_1,\cdots,L_n)$ such that
$$
\ap\lim_{y\rightarrow x}\frac{|f(y)-f(x)-L\cdot(y-x)|}{|y-x|}=0.
$$
\end{defn}
Approximate differentiability is a much weaker notion than
differentiability. The function $f:[0,1]\to\R$, $f(x)=1$ if
$x\in\R\setminus\Q$ and $f(x)=0$ if $x\in\Q$ is approximately
differentiable almost everywhere but nowhere differentiable. On
the other hand, even a continuous function might be approximately
differentiable almost nowhere \cite[p.297]{S}.

The following characterization of approximate differentiability was given by Whitney in \cite{Wh}. See also \cite[Theorem 3.1.8]{F}.

\begin{thm}{\bf\cite{Wh}}\label{Whitney}
Let $E\subset\R^n$ and $f:E\to\R$ be a $\Leb{n}$-measurable function. Then the following conditions are equivalent:
\begin{itemize}
\item[$\text{\em(a)}$] $f$ is approximately differentiable $\Leb{n}$-a.e.
\item[$\text{\em(b)}$]$f$ has a Lipschitz Luzin approximation, that is,
for any $\veps>0$ there is a closed set $F\subset E$ and a locally Lipschitz function $g:\R^n\to\R$ such that $f_{|F}=g_{|F}$ and $\Leb{n}(E\setminus F)<\veps$.
\item[$\text{\em(c)}$] $f$ has a smooth Luzin approximation, that is,
for any $\veps>0$ there is a closed set $F\subset E$ and a function $g\in {\mathcal C}^1(\R^n)$ 
such that $f_{|F}=g_{|F}$ and $\Leb{n}(E\setminus F)<\veps$.
\item[$\text{\em(d)}$] $f$ induces the following decomposition
$$
E=\bigcup_{i=1}^{\infty}E_i\cup Z,
$$
where $E_i$ are disjoint closed sets, $f_{|E_i}$ is Lipschitz continuous and $Z$ has measure zero.
\end{itemize}
\end{thm}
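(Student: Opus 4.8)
The plan is to prove the four conditions equivalent through the cycle $(a)\Rightarrow(c)\Rightarrow(b)\Rightarrow(a)$ together with the two implications $(a)\Rightarrow(d)$ and $(d)\Rightarrow(a)$; three of these are the ``soft'' classical directions. The implication $(c)\Rightarrow(b)$ is immediate, since a ${\mathcal C}^1$ function is locally Lipschitz. For $(b)\Rightarrow(a)$: if $f_{|F}=g_{|F}$ with $g$ locally Lipschitz, then $g$ is differentiable $\Leb{n}$-a.e.\ by Rademacher's theorem, and at any $x\in F$ which is simultaneously a Lebesgue density point of $F$ and a point of differentiability of $g$ — and $\Leb{n}$-a.e.\ point of $F$ is of this kind — the linear map $dg(x)$ serves as an approximate differential of $f$ at $x$; applying this with $\veps=1/m$ and taking the union over $m$ shows that $f$ is approximately differentiable $\Leb{n}$-a.e.\ on $E$. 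The implication $(d)\Rightarrow(a)$ is the same in spirit: extend each $f_{|E_i}$ to a Lipschitz $\tilde f_i\colon\R^n\to\R$ by McShane's formula, differentiate $\tilde f_i$ almost everywhere by Rademacher, and read off an approximate differential of $f$ at every density point of $E_i$ at which $\tilde f_i$ is differentiable; since $Z$ is null, this gives approximate differentiability $\Leb{n}$-a.e.\ on $E$.

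The substantial implication is $(a)\Rightarrow(c)$, which I would deduce from Whitney's ${\mathcal C}^1$ extension theorem. After discarding a null set, assume $f$ is approximately differentiable at every point of $E$, with measurable approximate differential $x\mapsto L(x)$; reducing to bounded $E$ if necessary, we may invoke Egorov's theorem. For $\delta>0$, $x\in E$ and $\rho>0$ put
\[
\gamma(x,\delta,\rho)=\rho^{-n}\,\Leb{n}\bigl(\{\,y\in B(x,\rho)\;:\;|f(y)-f(x)-L(x)\cdot(y-x)|>\delta\,|y-x|\,\}\bigr),
\]
so that approximate differentiability says $\sup_{0<\rho\le r}\gamma(x,\delta,\rho)\to0$ as $r\to0$, for each fixed $x$ and $\delta$. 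Fix $\veps>0$. Applying Egorov's theorem to the functions $x\mapsto\sup_{0<\rho\le1/m}\gamma(x,1/j,\rho)$ (for each $j$), Lusin's theorem to $L$, and discarding the set where $|L|$ is large, we obtain a closed set $F\subset E$ with $\Leb{n}(E\setminus F)<\veps$ on which $L$ is continuous and bounded and such that, for every $j$, $\gamma(x,1/j,\rho)<1/j$ for all $x\in F$ and all $0<\rho\le r_j$, with $r_j>0$ independent of $x$. The key point is then a \emph{uniform} first-order Taylor estimate on $F$. Given $x,y\in F$ with $d=|x-y|$ small, the ball $B\bigl(\tfrac{x+y}{2},\tfrac{3d}{2}\bigr)$ is contained in $B(x,2d)\cap B(y,2d)$ and has comparable volume, so for $j$ large (depending only on $n$) it must contain a point $z$ at which the $\tfrac1j$-Taylor inequality holds relative to both $x$ and $y$; subtracting the two inequalities and using the identity
\[
f(y)-f(x)-L(x)\cdot(y-x)=\bigl[f(z)-f(x)-L(x)\cdot(z-x)\bigr]-\bigl[f(z)-f(y)-L(y)\cdot(z-y)\bigr]-\bigl(L(y)-L(x)\bigr)\cdot(z-y)
\]
gives $|f(y)-f(x)-L(x)\cdot(y-x)|\le \tfrac{4d}{j}+2d\,|L(y)-L(x)|$. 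Hence, by the uniform continuity of $L$ on $F$,
\[
\limsup_{\substack{x,y\in F\\ |x-y|\to0}}\frac{|f(y)-f(x)-L(x)\cdot(y-x)|}{|y-x|}=0 ,
\]
uniformly on compact subsets of $F$. Whitney's extension theorem then produces $g\in{\mathcal C}^1(\R^n)$ with $g_{|F}=f_{|F}$ (and $\nabla g_{|F}=L_{|F}$), which is $(c)$.

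Finally, for $(a)\Rightarrow(d)$ I would run a cruder version of the same scheme. With the notation above and $\delta$ fixed equal to $1$, the sets
\[
F_k=\bigl\{\,x\in E\;:\;|L(x)|\le k,\ \gamma(x,1,\rho)\le\theta_n\text{ for all }0<\rho\le1/k\,\bigr\}
\]
($\theta_n$ a small dimensional constant) increase to a set of full measure in $E$, and the three-point argument above — now with no need to control $L(y)-L(x)$ — shows that $f$ restricted to $F_k$ is Lipschitz, with constant $O(k)$, on pairs of points at distance $\le1/(2k)$. Intersecting $F_k$ with a grid of mesh $\ll1/k$ and with an exhaustion by compact subsets turns this into a countable family of compact sets on which $f$ is genuinely Lipschitz and whose union covers $E$ up to a null set. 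I expect the only remaining nuisance to be the requirement in $(d)$ that the pieces be pairwise \emph{disjoint}: this is arranged by a standard but slightly delicate bookkeeping — replace the $i$-th piece by its intersection with the complement of a small open neighbourhood of the previously chosen (closed) pieces, and iterate so that the successively uncovered sets shrink to measure zero. The genuine analytic difficulty is concentrated in $(a)\Rightarrow(c)$, namely in upgrading the pointwise approximate differentiability to the uniform Taylor estimate on a large closed set; the remaining implications are routine real analysis.
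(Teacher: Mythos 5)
The paper does not actually prove Theorem \ref{Whitney}: it quotes it from \cite{Wh} and \cite[Theorem 3.1.8]{F}, and the argument it does write out is for the metric analogue, Theorem \ref{main}. Your proof is correct and is essentially the classical Whitney--Federer argument on which both of those sources and the paper's proof of Theorem \ref{main} are built. The core step, (a)$\Rightarrow$(b)/(d), is the Egorov/Lusin uniformization of the approximate Taylor estimate followed by the choice of a common point $z$ in the intersection of two balls avoiding both exceptional sets; this is exactly the paper's argument with $\psi_i^\eta$, $\phi_i^\eta$ and the point $z\in B(x,1/i)\cap B(y,1/i)$, and your three-point identity and the dimensional threshold for $j$ check out. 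The converse directions via McShane extension, Rademacher's theorem and density points coincide with the paper's (c)$\Rightarrow$(a). The one leg with no counterpart in the paper is (a)$\Rightarrow$(c) via the Whitney $\mathcal{C}^1$ extension theorem; your uniform estimate on $F$ together with the continuity of $L$ on $F$ is precisely the hypothesis of that theorem, so this is the standard route (and the reason the $\mathcal{C}^1$ clause is dropped in the metric version). Two details worth tightening: the measurability of $x\mapsto\sup_{0<\rho\le 1/m}\gamma(x,1/j,\rho)$ is an uncountable supremum and should be reduced to rational $\rho$ (cf.\ the paper's Lemma \ref{aplimsup}) before Egorov is invoked; and for the pairwise disjoint \emph{closed} pieces in (d), the cleanest device is inner regularity --- exhaust each measurable set $G_i=F_i'\setminus\bigcup_{m<i}F_m'$ by countably many pairwise disjoint compact subsets up to a null set --- rather than the shrinking-neighbourhood iteration you sketch, though that can also be made to work.
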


\subsection{Approximate differentiability in metric measure spaces}

Our main aim is to extend the statement of Whitney's theorem (Theorem \ref{Whitney}) to the more general setting of a metric measure space.
To formulate a definition of approximate differentiability in such setting, we employ the ideas of Cheeger
\cite{Ch}, who extended the fundamental notions of first order differential calculus to a general class of metric spaces.
We start with several standard definitions.

Throughout the paper $(X,d,\mu)$ refers to a \em metric measure space\em, where $(X,d)$ is a \em separable \em metric space and $\mu$ is \em a non-trivial, locally finite  Borel regular \em measure.

 For $x\in X$ and $r>0$ we denote by $B(x,r):=\{y\in X: d(x,y)<r\}$ 
the open ball of radius $r$ centered at $x$.

\noindent
One of the natural assumptions posed on the measure is the doubling condition.

\begin{defn}
A measure $\mu$ on $X$ is called  \em doubling \em if there is a
positive constant $C_{\mu}$ such that
$$
\mu(B(x,2r))\leq C_{\mu}\,\mu(B(x,r)),
$$
for each $x\in X$ and $r>0$.
\end{defn}

Recall that a point $x\in X$ is a \em density point \em for a $\mu$-measurable set $A\subset X$, if
$$
\lim_{r\rightarrow 0}\frac{\mu(B(x,r)\cap A)}{\mu(B(x,r))}=1.
$$

The following theorem gives a characterization of approximate continuity in the metric setting
and gives an interpretation of the notion of ``$0$-smoothness''.
For a proof of the theorem in the Euclidean setting see \cite{F} or \cite{EG}. See also \cite{Bo} for a nice discussion about the role of
Luzin-Denjoy theorem.

We state the result without a proof
since it follows the lines of the classical setting.
One just need to have in mind that Lebesgue differentiation theorem holds in spaces equipped with a doubling measures (see \cite[1.8]{He}).

\begin{thm}\label{zeroLuzin}
Let $(X,d,\mu)$ be a metric measure space with $\mu$-doubling. Let $E\subset X$ be a bounded $\mu$-measurable set and $f:X\to\R$. The following conditions are equivalent:
\begin{itemize}
\item[$\text{\em(a)}$] $f$ is $\mu$-measurable.
\item[$\text{\em(b)}$] $f$ is approximately continuous $\mu$-a.e. $x$ in $E$.
\item[$\text{\em(c)}$] $f$ is quasicontinuous, that is, for each $\veps>0$ there is a closed set $F\subset E$ with $\mu(E\setminus F)<\veps$ and $f_{|F}$ is continuous. In other words, $f$ has a Luzin approximation of order zero.

\item[$\text{\em(d)}$]  $f$ induces a (zero order) Luzin decomposition of $E$, that is,
$$
E=\bigcup_{i=1}^{\infty}E_i\cup Z,
$$
where $E_i$ are closed measurable sets such that $f_{|E_i}$ is continuous and $Z$ has measure zero.
\end{itemize}

\end{thm}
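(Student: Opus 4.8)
The plan is to prove the cycle $(a)\Rightarrow(c)\Rightarrow(d)\Rightarrow(b)\Rightarrow(a)$, transplanting the Euclidean arguments and using essentially only that a doubling $\mu$ satisfies the Lebesgue differentiation theorem, and hence the Lebesgue density theorem. As a preliminary remark, since $E$ is bounded and $\mu$ is locally finite and doubling we have $\mu(E)<\infty$, and since $\mu$ is Borel regular its restriction to a ball containing $E$ is a finite Borel measure on a metric space, hence inner regular by closed sets; this is precisely what is needed to invoke Luzin's theorem below.

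The implication $(a)\Rightarrow(c)$ is Luzin's theorem: write the $\mu$-measurable function $f$ as a $\mu$-a.e.\ limit of simple functions, approximate the finitely many level sets of each simple function from inside by closed sets, and combine with Egorov's theorem to obtain, for a given $\veps>0$, a closed set $F\subset E$ with $\mu(E\setminus F)<\veps$ such that $f_{|F}$ is continuous. For $(c)\Rightarrow(d)$, apply $(c)$ with $\veps=1/i$ to get closed sets $F_i\subset E$ with $\mu(E\setminus F_i)<1/i$ and $f_{|F_i}$ continuous, and set $E_i:=F_1\cup\cdots\cup F_i$; then $E_i$ is closed, $f_{|E_i}$ is continuous (a map on a finite union of closed sets is continuous once its restriction to each of them is), and $Z:=E\setminus\bigcup_i E_i$ satisfies $\mu(Z)\le\inf_i\mu(E\setminus F_i)=0$.

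For $(d)\Rightarrow(b)$ observe first that $(d)$ trivially yields $(a)$, since each $f_{|E_i}$ is continuous on a closed, hence Borel, set, so $f$ restricted to $\bigcup_i E_i$ is Borel and $Z$ is $\mu$-null; in particular the sets $\{y:|f(y)-f(x)|<\veps\}$ are $\mu$-measurable, so the density conditions that follow are meaningful. By the Lebesgue density theorem, $\mu$-a.e.\ point of each $E_i$ is a $\mu$-density point of $E_i$; fix such an $x\in E_i$. For $\veps>0$, continuity of $f_{|E_i}$ at $x$ gives $\delta>0$ with $E_i\cap B(x,\delta)\subset\{y:|f(y)-f(x)|<\veps\}$, so $\mu\bigl(B(x,r)\cap\{y:|f(y)-f(x)|<\veps\}\bigr)\ge\mu(B(x,r)\cap E_i)$ for $0<r<\delta$; dividing by $\mu(B(x,r))$ and letting $r\to0$ shows that $x$ is a density point of $\{y:|f(y)-f(x)|<\veps\}$. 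As $\veps$ was arbitrary, $f$ is approximately continuous at $x$, hence $\mu$-a.e.\ in $E$.

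The remaining implication $(b)\Rightarrow(a)$ is the Luzin direction of the Denjoy--Luzin theorem and is the step requiring the most care. Let $A\subset E$ with $\mu(E\setminus A)=0$ be the set of approximate-continuity points, fix $c\in\R$, and set $S:=\{x\in E:f(x)<c\}$. If $x\in A\cap S$ is also a $\mu$-density point of $E$ (which excludes only a $\mu$-null subset of $S$), then with $\veps:=c-f(x)>0$ the $\mu$-measurable set $\{y:|f(y)-f(x)|<\veps\}\cap E$ lies in $S$ and has $x$ as a density point, so $x$ is a $\mu$-density point of $S$; symmetrically, if $\{f=c\}\cap E$ is $\mu$-null then $\mu$-a.e.\ point of $E\setminus S$ is a point of $\mu$-density zero of $S$. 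One deduces that $S$ is $\mu$-measurable by a measurable-hull argument: picking a measurable hull $H\subset E$ of $S$ with $\mu(H\cap T)=\mu^*(S\cap T)$ for every measurable $T$, the outer density of $S$ at a point equals the density of $H$ there, so at $\mu$-a.e.\ point of $H\setminus S\subset E\setminus S$ the density of $H$ would be simultaneously $1$ (Lebesgue density theorem for $H$) and $0$ (zero-density of $S$), forcing $\mu^*(H\setminus S)=0$ and hence $S=H\setminus(H\setminus S)$ to be $\mu$-measurable. Since $\mu(E)<\infty$ one checks, exactly as in the classical case, that this applies for all $c$ in a dense subset of $\R$, whence $f_{|E}$ is $\mu$-measurable. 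The main obstacle is really this last implication: handling the level sets $\{f=c\}$ and making the hull argument precise; everything else is a mechanical adaptation of the proof in \cite{F} (see also \cite[\S1.7]{EG}), which is why the theorem is stated above without proof.
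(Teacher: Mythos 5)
The paper does not actually prove Theorem \ref{zeroLuzin}: it is stated without proof, with a pointer to the classical Euclidean argument in \cite{F}, \cite{EG} and the remark that the Lebesgue differentiation theorem holds for doubling measures. Your proposal is precisely that classical argument transplanted, and the implications $(a)\Rightarrow(c)\Rightarrow(d)\Rightarrow(b)$ are correct and complete: Luzin--Egorov for $(a)\Rightarrow(c)$ uses only inner regularity of the finite Borel measure obtained by restricting $\mu$ to a ball containing $E$, and $(d)\Rightarrow(b)$ uses only the density theorem for doubling measures, exactly as the paper intends.

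Two points in $(b)\Rightarrow(a)$ need tightening. First, before measurability of $f$ is established you cannot refer to $\{y:|f(y)-f(x)|<\veps\}$ as a ``$\mu$-measurable set''; you must use the equivalent formulation of approximate continuity recorded in Section \ref{preli}, namely a measurable set $A_x$ of density $1$ at $x$ along which $f\to f(x)$, and deduce that $S=\{f<c\}$ has outer density $1$ (resp.\ $0$) at the relevant points because it contains (resp.\ is disjoint from) $A_x\cap B(x,\delta)$. Second, and more substantively, the assertion that $\mu^*(\{f=c\}\cap E)>0$ for only countably many $c$ does \emph{not} follow from $\mu(E)<\infty$ alone: a set of finite measure can be partitioned into uncountably many pairwise disjoint sets each of positive outer measure (Bernstein-type decompositions), so disjointness plus finiteness is insufficient. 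What rescues the step is approximate continuity itself: for $c\neq c'$ the measurable hulls of $\{f=c\}$ and $\{f=c'\}$ are a.e.\ disjoint by the same density argument you apply to $S$, and \emph{then} $\mu(E)<\infty$ yields countability. Alternatively one can avoid level sets entirely, as Federer does, by showing that the hulls of $\{f<a\}$ and $\{f>b\}$ are a.e.\ disjoint for all rationals $a<b$ and assembling a measurable representative from that. With either repair your proof is complete and coincides with the argument the paper implicitly relies on.
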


The structure of metric spaces endowed with a doubling measure has turned out to be too weak to develop a first order differential calculus involving derivatives and therefore extra conditions are needed. The following Poincar\'e inequality creates a link between the measure, the metric and the upper-gradient,
and is ubiquitous in analysis on metric spaces.  We recall that non-negative Borel function $g$ on $X$ is a \em upper
gradient \em of an extended real-valued function $f$ on $X$ if $
|f(\gamma(a))-f(\gamma(b))|\leq\int_{\gamma}g$
for all \ rectifiable curves $\gamma:[a,b]\to X$.

\begin{defn}
Let $1\leq p\le\infty$. We say that $(X,d,\mu)$ supports a \em weak
$p$-Poincar\'e inequality \em if there are constants $\lambda_p\ge 1$ and $C_p>0$
such that when $f:X\to\R\cup\{-\infty,\infty\}$ is a measurable function, $g:X\rightarrow[0,\infty]$ is an upper gradient of $f$ and $B(x,r)$ is a ball in $X$,
\begin{equation}\label{poinc}
\jint_{B(x,r)}|f-f_{B(x,r)}|\,d\mu\leq C_p\,r\Big(\jint_{B(x,\lambda_p
r)}g^p d\mu\Big)^{1/p}
\end{equation}
if $1\leq p<\infty$, and
$$
\jint_{B(x,r)}|f-f_{B(x,r)}|\,d\mu\leq C_{\infty}\,r\|g\|_{L^{\infty}(B(x,\lambda_{\infty} r))}
$$
if $p=\infty$. 

Here and everywhere below we write
$$
f_{A}=\jint_A f:=\frac{1}{\mu(A)}\int_A f d\mu,
$$
where $A\subset X$ and $0<\mu(A)<\infty$.
\end{defn}

We now recall the theorem of Cheeger \cite{Ch}, which states that a metric space equipped with a doubling measure and having a $p$-Poincar\'e inequality admits a certain differentiable structure
for which Lipschitz functions are differentiable $\mu$-a.e.

\begin{thm}
\label{Cheeger}
Let $X$ be a metric space with a doubling Borel
measure $\mu$,
and suppose
that $X$ supports a weak $p$-Poincar\'e inequality for some $1\leq
p<\infty$. Then there exists a countable collection
$\{(X_{\alpha},\bf {x}_{\alpha})\}_{\alpha\in \Lambda}$ of measurable sets $X_{\alpha}\subset X$ and Lipschitz coordinates
$$
{\bf x}_{\alpha}=(x_{\alpha}^1,\ldots,x_{\alpha}^{N(\alpha)}):
X\longrightarrow\R^{N(\alpha)}
$$
with the following properties:
\begin{enumerate}
\item[$(i)$] $\mu\Big(X\setminus\bigcup_{\alpha} X_{\alpha}\Big)=0$;
\item[$(ii)$] There exists $N\geq 0$ such that $N(\alpha)\leq N$ for each $(X_{\alpha},{\bf x}_{\alpha})$;
\item[$(iii)$] If $f:X\rightarrow \R$ is Lipschitz, then for each $(X_{\alpha},{\bf x}_{\alpha})$ there exists a unique (up to a set of zero measure)
measurable bounded vector valued function $d^{\alpha}f:X_{\alpha}\longrightarrow\R^{N(\alpha)}$ such that
\begin{equation}\label{diff}
\lim_{\substack{y\to x\\ y\neq x}}
\frac{|f(y)-f(x)-d^{\alpha}f(x)\cdot({\bf x}_{\alpha}(y)-{\bf
x}_{\alpha}(x))|}{d(y,x)}= 0
\end{equation}
for $\mu$-a.e. $x\in X_{\alpha}$.
\end{enumerate}
\end{thm}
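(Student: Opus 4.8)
The plan is to build the charts $(X_\alpha,{\bf x}_\alpha)$ by a greedy selection of Lipschitz coordinate functions, with the weak Poincar\'e inequality and the doubling property supplying the quantitative control. The fundamental analytic input, replacing Rademacher's theorem from the Euclidean case, is a differentiation estimate for Lipschitz functions: under these hypotheses the pointwise upper Lipschitz constant $\Lip f(x)=\limsup_{r\to0}\sup_{d(x,y)\le r}|f(y)-f(x)|/r$ coincides $\mu$-a.e.\ with the pointwise lower one $\lip f(x)=\liminf_{r\to0}\sup_{d(x,y)\le r}|f(y)-f(x)|/r$ (the ``Lip--lip'' property, cf.\ Keith \cite{Ke1}), and both agree $\mu$-a.e.\ with the minimal $p$-weak upper gradient of $f$. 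More precisely, I would extract a blow-up statement: at $\mu$-a.e.\ $x$, a rescaling of $f$ along a suitable sequence of balls is approximated, in an $L^1$-mean sense, by a ``generalized linear function.'' I would prove this by iterating \eqref{poinc} on dyadic families of balls and invoking a Lebesgue-point / maximal-function argument, which is where doubling enters decisively.

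Granting the blow-up lemma, I would fix a finite tuple ${\bf x}=(x^1,\dots,x^k)$ of Lipschitz functions and let $U({\bf x})$ be the set of points $x$ at which \emph{every} Lipschitz $f$ is differentiable with respect to ${\bf x}$, meaning the limit in \eqref{diff} holds (with ${\bf x}_\alpha$ replaced by ${\bf x}$) for some $d^{{\bf x}}f(x)\in\R^k$. Existence of such a vector at $\mu$-a.e.\ point of a suitable set follows from the blow-up lemma applied to $f$ and to each coordinate $x^j$; uniqueness of $d^{{\bf x}}f(x)$ is equivalent to the coordinates being \emph{infinitesimally independent} at $x$, i.e.\ no nonzero linear combination $c\cdot({\bf x}(y)-{\bf x}(x))$ being $o(d(y,x))$, and I would arrange this by only ever using minimal tuples, discarding the null set where independence fails.

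The construction then runs by exhaustion. Using separability of $X$, fix a countable family of Lipschitz functions that is rich enough for differentiation. On each member of a countable measurable partition of $X$, I would select greedily a tuple ${\bf x}_\alpha$ that is maximal, in the sense that no further Lipschitz coordinate is infinitesimally independent of ${\bf x}_\alpha$ at $\mu$-a.e.\ point of that piece; the bad points are absorbed into a measure-zero set, and countably many such pieces cover $X$ up to a $\mu$-null set, giving $(i)$. Property $(iii)$ then holds by the definition of $U({\bf x}_\alpha)$, after intersecting the pieces with it; measurability and boundedness of $d^\alpha f$ follow from $|d^\alpha f(x)|\lesssim \Lip f(x)$ together with a measurable selection for the unique solution of \eqref{diff}.

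The main obstacle is property $(ii)$, the \emph{uniform} bound $N(\alpha)\le N$ with $N$ depending only on $C_\mu$ and the Poincar\'e constants. This is invisible to the greedy construction and amounts to showing that one cannot have arbitrarily many infinitesimally independent Lipschitz functions at a point of positive upper density. Here I would combine a Vitali-type covering count, controlled by $C_\mu$, with \eqref{poinc} applied to the coordinates and their linear combinations, so as to bound the ``dimension of the infinitesimal structure'' at $\mu$-a.e.\ $x$. A companion point, also needed for the charts to be useful, is that $\Lip(c\cdot{\bf x}_\alpha)$ is comparable to $|c|$ uniformly, so that ${\bf x}_\alpha$ behaves like a bi-Lipschitz coordinate system onto its image; quantifying both facts uniformly in $\alpha$ is the technically heaviest step, and it is essentially the only place where the full strength of the Poincar\'e inequality, not just doubling, is used.
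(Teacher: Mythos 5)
The paper does not prove this theorem: it is Cheeger's differentiation theorem, quoted from \cite{Ch} (see also Keith \cite{Ke1}) and used as a black box, so there is no internal proof to compare your attempt against. Judged on its own terms, your proposal correctly identifies the architecture of the known proofs --- the a.e.\ coincidence of $\operatorname{Lip}f$ and $\operatorname{lip}f$ (equivalently, a blow-up / asymptotic generalized linearity statement) as the analytic engine, an exhaustion by maximal tuples of infinitesimally independent coordinates to produce the charts, and a separate uniform bound on the number of such coordinates --- but it is a roadmap rather than a proof. Every step that carries the actual difficulty is deferred: the blow-up lemma is ``proved'' by the phrase ``iterating the Poincar\'e inequality and invoking a maximal-function argument''; the uniform bound $N(\alpha)\le N$ is acknowledged as the technically heaviest step and then asserted; the a.e.\ uniqueness of the differential and the measurable selection of $d^{\alpha}f$ are likewise only named. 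These items are precisely the substance of Cheeger's paper, each occupying several pages there.

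One concrete logical gap beyond the missing details: you define $U({\bf x})$ as the set of points at which \emph{every} Lipschitz $f$ is differentiable with respect to ${\bf x}$. The blow-up lemma only yields, for each individual $f$, a full-measure subset of the chart (the exceptional null set depends on $f$), and an uncountable intersection of full-measure sets need not have full measure; so $U({\bf x})$ cannot be shown to carry full measure without first reducing to a countable family of Lipschitz functions (using separability) and then proving a stability/approximation statement that upgrades differentiability from that countable family to all Lipschitz functions. That reduction is genuinely needed --- it is how Cheeger and Keith proceed --- and is not automatic from what you have written. As it stands, the proposal is a correct table of contents for the proof rather than a proof.
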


If a metric measure space $(X,d,\mu)$ satisfies the conclusion of
Theorem \ref{Cheeger}, we say that the space admits a \em
strong measurable differentiable structure\em. In particular, $\{(X_{\alpha},\bf {x}_{\alpha})\}_{\alpha\in \Lambda}$ is said to be a strong measurable differentiable structure for $(X,d,\mu)$.

Notice that although the exponent $p$ is present in the hypothesis of this result,
it has no role in the conclusions. Keith \cite{Ke1} weakened the hypotheses using the $\Lip-\lip$ condition formulated as follows.
There exists a constant $K\geq 1$ such that
$$
\Lip f(x)\leq K\lip f(x)
$$
for all Lipschitz functions $f:X\longrightarrow\R$ and
for $\mu$-almost every $x\in X$, where $\Lip f$ and $\lip f$ denote the upper and lower scaled oscillation functions respectively.
This $\Lip-\lip$ condition is satisfied
by any complete metric space endowed with a doubling measure which admits a $p$-Poincar\'e inequality for some $1\leq
p<\infty$.

See [KlMa] for an accessible introduction to basis of the theory of differentiable structures.

The existence of the differentiable structure allows us to consider the following notion of differentiability of a function.

\begin{defn}
A function $f:X\to\R$ is \em Cheeger differentiable \em at a point
$x\in X_{\alpha}$  with respect to the strong measurable
differentiable structure $\{(X_{\alpha},\bf {x}_{\alpha})\}_{\alpha\in \Lambda}$ if  there
exists a unique vector (\em Cheeger differential \em) $d^\alpha f(x)\in\R^{N(\alpha)}$ such that
\eqref{diff} holds for $f$ at $x$.
\end{defn}

\noindent
Notice that
the definition of the
differentiable structure implies that the uniqueness of the Cheeger
differential can be inferred from its existence almost everywhere on $X$. The
exceptional set depends only on the differentiable structure.






Analogously one can introduce a notion of approximate
differentiability of a function defined on a metric space. See \cite{Ke2} and
\cite{BS}.

\begin{defn} If a metric measure space $(X,d,\mu)$ satisfies the
conclusion of Theorem \ref{Cheeger}, where limit in \eqref{diff}
is replaced with an approximate limit, it is said that the space
admits  \em an
approximate differentiable structure \em
(or \em a measurable differentiable structure\em).
\end{defn}

Recently, Bate and Speight \cite{BS} have proved that if a metric measure
space admits a strong measurable differentiable structure then the measure is
pointwise doubling. They also gave an example showing that if one only requires
an approximate differentiable structure, the measure does not need to be pointwise doubling.

\begin{defn}
\label{DefApDif1} Let $(X,d,\mu)$ be a metric measure space that supports an approximate differentiable
structure $\{(X_{\alpha},\bf {x}_\alpha)\}_{\alpha \in \Lambda}$.
A function $f: X \to \R$ is \em approximately differentiable \em at $x \in
X_\alpha$ with respect to $(X_{\alpha},{\bf x}_\alpha)$  if there
exists a vector $L^\alpha f(x) \in \R^{N(\alpha)}$ \em (approximate differential) \em such
that
\begin{equation}\label{eqApDif1}
\ap\lim_{y\rightarrow x}\frac{|f(y)-f(x)-
L^\alpha f(x)({\bf x}_\alpha(y)- {\bf
x}_\alpha(x))|}{d(x,y)}=0,
\end{equation}
i.e. for every $\veps>0$ the set
\begin{equation}\label{good}
A_{x,\veps}=\Big\{y:\frac{|f(y)-f(x)-L^\alpha
f(x)\cdot({\bf x}_\alpha(y)-{\bf
x}_\alpha(x))|}{d(x,y)}<\veps\Big\}
\end{equation}
has $x$ as a density point
\end{defn}

The following lemma shows that the approximate differential is well-defined, in the sense that if there exists such vector
$L^\alpha f(x)$ satisfying \eqref{eqApDif1} then
it is unique for almost all points $x\in X_\alpha$. Thus, redefining (if necessary) the given measurable differentiable structure
on a set of measure zero we get the structure with respect to which the approximate differential is always unique.
\begin{lem}
Let $\{(X_{\alpha},\bf{ x}_{\alpha})\}_{\alpha\in \Lambda}$ be an approximate differentiable structure defined on a metric measure space $(X,d,\mu)$.
Then for every $\alpha\in {\bf \Lambda}$ one can choose a subset $\tilde X_\alpha \subset X_\alpha$ such that $\mu(X_\alpha\setminus \tilde X_\alpha)=0$ and
for any function $f:X\to \mathbb{R}$ and every $x\in \tilde X_\alpha$ the following statement is true:
if there exist vectors $L^\alpha_1 f(x),\,L^\alpha_2 f(x)\in \mathbb{R}^{N(\alpha)}$
such that
\begin{align}
\label{ApDifUniq}\text{\em ap}\lim_{y\rightarrow x}\frac{|f(y)-f(x)- L^\alpha_i f (x)\cdot({\bf x}_\alpha(y)- {\bf x}_\alpha(x))|}{d(x,y)}  = 0,\quad i=1,2,
\end{align}
then $L^\alpha_1 f(x) = L^\alpha_2 f(x)$.
\end{lem}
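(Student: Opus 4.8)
The plan is to reduce the statement to a single claim about the chart ${\bf x}_\alpha$ and then deduce that claim from the uniqueness clause of Theorem~\ref{Cheeger}(iii) applied to the zero function. Fix $\alpha$ and abbreviate $N=N(\alpha)$, ${\bf x}={\bf x}_\alpha$. For $x\in X_\alpha$ set
\[
V_x:=\Big\{v\in\R^{N}:\ \ap\lim_{y\to x}\frac{|v\cdot({\bf x}(y)-{\bf x}(x))|}{d(x,y)}=0\Big\}.
\]
First I would observe that $V_x$ is a linear subspace of $\R^{N}$ for every $x\in X_\alpha$: closure under scalar multiples is immediate, and closure under sums follows from the triangle inequality $|(v+w)\cdot({\bf x}(y)-{\bf x}(x))|\le|v\cdot({\bf x}(y)-{\bf x}(x))|+|w\cdot({\bf x}(y)-{\bf x}(x))|$ together with the elementary remark that the intersection of two sets having $x$ as a density point again has $x$ as a density point. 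The same remark shows that whenever $f:X\to\R$ and $L^\alpha_1f(x),L^\alpha_2f(x)$ both satisfy \eqref{ApDifUniq}, the difference $L^\alpha_1f(x)-L^\alpha_2f(x)$ lies in $V_x$. Hence it suffices to prove that $V_x=\{0\}$ for $\mu$-a.e.\ $x\in X_\alpha$, and then to take $\tilde X_\alpha:=\{x\in X_\alpha:V_x=\{0\}\}$.

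To prove this I would argue by contradiction, assuming that $S:=\{x\in X_\alpha:V_x\ne\{0\}\}$ has positive measure. Consider
\[
Z:=\{(x,v)\in X_\alpha\times\R^{N}:\ |v|=1,\ v\in V_x\},
\]
whose projection to the first factor is exactly $S$. Unwinding the definition of the approximate limit and using the continuity of ${\bf x}$ and $d$ (so that the truncated balls $\{y\in B(x,r):|v\cdot({\bf x}(y)-{\bf x}(x))|\ge\tfrac{1}{k}d(x,y)\}$ and their $\mu$-measures depend in a Borel fashion on $(x,v,r,k)$), one checks that $Z$ is a Borel subset of $X_\alpha\times\R^{N}$. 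By a standard measurable selection theorem there is then a $\mu$-measurable map $v:S\to\R^{N}$ with $v(x)\in V_x$ and $|v(x)|=1$ for all $x\in S$; extending $v$ by $0$ on $X_\alpha\setminus S$, we obtain a bounded $\mu$-measurable vector field $v:X_\alpha\to\R^{N}$.

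Now I would apply Theorem~\ref{Cheeger}(iii) — available because the space admits an approximate differentiable structure — to the Lipschitz function $f\equiv 0$. By the definition of $V_x$ on $S$, and trivially on $X_\alpha\setminus S$ where $v\equiv 0$, we have for every $x\in X_\alpha$
\[
\ap\lim_{y\to x}\frac{|0-0-v(x)\cdot({\bf x}(y)-{\bf x}(x))|}{d(x,y)}=0 .
\]
Thus $v$ is a bounded $\mu$-measurable vector field satisfying the defining relation of the approximate differential of $f\equiv 0$ at $\mu$-a.e.\ $x\in X_\alpha$. Since the zero field does so as well, the uniqueness part of Theorem~\ref{Cheeger}(iii) forces $v(x)=0$ for $\mu$-a.e.\ $x\in X_\alpha$, contradicting $|v(x)|=1$ on the positive-measure set $S$. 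Hence $\mu(S)=0$, which gives the claim and therefore the Lemma. The one step that genuinely needs care is the measurability of $Z$ (equivalently, of the assignment $x\mapsto V_x$), required to run the selection argument; the rest — recognizing $V_x$ as a subspace, and reading non-uniqueness of the pointwise approximate differential as a spurious measurable differential of the zero function — is routine.
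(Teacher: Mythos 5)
Your argument is correct and follows essentially the same route as the paper's proof: both reduce the claim, via the triangle inequality and the observation that the intersection of two sets having $x$ as a density point again has $x$ as a density point, to the assertion that the zero function has a pointwise-unique approximate differential at $\mu$-a.e.\ point of $X_\alpha$, and both take $\tilde X_\alpha$ to be the set where that pointwise uniqueness holds. The only difference is one of detail: the paper asserts that full-measure pointwise uniqueness for $g\equiv 0$ ``follows from the definition of the approximate differentiable structure'' in a single line, whereas you derive it from the uniqueness-up-to-a-null-set clause for \emph{measurable} differentials by a measurable-selection argument on the set $Z=\{(x,v):|v|=1,\ v\in V_x\}$ --- which is precisely the step needed to pass from uniqueness of measurable vector fields to pointwise uniqueness, and which you correctly identify as the only point requiring care.
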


\begin{proof} 
The definition of the approximate differentiable structure implies that function $g\equiv 0$ has
a unique approximate differential on a set $\tilde X_\alpha$, such that $\mu(X_\alpha \setminus \tilde X_\alpha)=0$.

Assume that there is a function $f : X \to \R$ and a point $x
\in \tilde X_\alpha$ such that two different vectors $L^\alpha_1 f(x)$ and
$L^\alpha_2 f(x)$ satisfy \eqref{ApDifUniq}. By the definition of
the approximate limit, there exist sets $A_1,A_2\subset X_{\alpha}$ for
which $x$ is a density point and
\begin{equation*}
\lim_{\substack{y\to x\\ y\in A_i}}
\frac{|f(y)-f(x)-L^{\alpha}_i f(x)\cdot({\bf x}_{\alpha}(y)-{\bf
x}_{\alpha}(x))|}{d(y,x)}= 0,
\end{equation*}
for $i=1,2$. By the triangle inequality, we have that
\begin{equation}\label{eq}
\lim_{\substack{y\to x\\ y\in A_1\cap A_2}}
\frac{|(L^{\alpha}_1f-L^{\alpha}_2 f)(x)\cdot({\bf x}_{\alpha}(y)-{\bf
x}_{\alpha}(x))|}{d(y,x)}= 0.
\end{equation}
%

Since $\tilde X_\alpha$ is a set where $g\equiv 0$ has
a unique approximate differential, we have
\[
(L^{\alpha}_1f-L^{\alpha}_2 f)(x)=0,
\]
as required.
\end{proof}


In what follows we will prove that the approximate differential is a measurable function. We will need the following technical lemma.

\begin{lem}
\label{aplimsup}
Let $(X,d,\mu)$ be a metric measure space
and let $g: X \times X \to \R$ be a $\mu \otimes \mu$-measurable function.
Then $ x\mapsto {\rm ap}\limsup_{y\to x} g(x,y)$ is $\mu$-measurable.
\end{lem}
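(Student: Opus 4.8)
The plan is to prove that $\{x\in X:\Phi(x)<t\}$ is $\mu$-measurable for every $t\in\R$, where $\Phi(x):=\ap\limsup_{y\to x}g(x,y)$; since $t$ is arbitrary this gives $\mu$-measurability of $\Phi$. Write $E_x^s:=\{y\in X:g(x,y)>s\}$ and recall that $\Phi(x)=\inf\{s\in\R: x \text{ is a density-zero point of } E_x^s\}$. Since $E_x^{s'}\subset E_x^s$ for $s'>s$, we have $\Phi(x)<t$ if and only if $E_x^s$ has density $0$ at $x$ for some rational $s<t$, whence
\[
\{x:\Phi(x)<t\}=\bigcup_{\substack{s\in\Q\\ s<t}}D_s,\qquad
D_s:=\Big\{x:\lim_{r\to0^+}\frac{\mu(B(x,r)\cap E_x^s)}{\mu(B(x,r))}=0\Big\}.
\]
Thus everything reduces to showing that each $D_s$ is $\mu$-measurable.

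First I would fix $s\in\R$ and a radius $r>0$ and check that $x\mapsto\mu(B(x,r))$ and $x\mapsto\mu(B(x,r)\cap E_x^s)$ are $\mu$-measurable. Since $(X,d)$ is separable and $\mu$ is locally finite, $\mu$ is $\sigma$-finite, so Tonelli's theorem is available for $\mu\otimes\mu$. The function $(x,y)\mapsto\mathbf 1_{\{d(x,y)<r\}}$ is Borel because $d$ is continuous, and $(x,y)\mapsto\mathbf 1_{\{g(x,y)>s\}}$ is $\mu\otimes\mu$-measurable by hypothesis, so their product is a non-negative $\mu\otimes\mu$-measurable function; Tonelli then yields the $\mu$-measurability of
\[
x\longmapsto\int_X\mathbf 1_{\{d(x,y)<r\}}\,\mathbf 1_{\{g(x,y)>s\}}\,d\mu(y)=\mu\big(B(x,r)\cap E_x^s\big),
\]
and, omitting the second indicator, of $x\mapsto\mu(B(x,r))$. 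The set $U:=\{x:\mu(B(x,\rho))=0 \text{ for some }\rho>0\}$ is open (since $B(y,\rho/2)\subset B(x,\rho)$ whenever $d(x,y)<\rho/2$) and $\mu$-null (cover it by the countable subcover of null balls guaranteed by separability). Off $U$, and for $r$ small enough that $\mu(B(x,r))<\infty$ (which holds by local finiteness), the quotient $F_s(x,r):=\mu(B(x,r)\cap E_x^s)/\mu(B(x,r))$ lies in $[0,1]$ and is a $\mu$-measurable function of $x$ on the $\mu$-measurable set $\{0<\mu(B(\cdot,r))<\infty\}$.

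It then remains to pass from the continuum of radii to a countable one. For fixed $x\notin U$ the maps $r\mapsto\mu(B(x,r))$ and $r\mapsto\mu(B(x,r)\cap E_x^s)$ are non-decreasing and left-continuous (continuity from below of $\mu$ along the increasing union $B(x,r)=\bigcup_{\rho<r}B(x,\rho)$, the second time applied to $\mu$ restricted to $E_x^s$), hence $r\mapsto F_s(x,r)$ is left-continuous on a small interval $(0,\delta_x)$; consequently $\limsup_{r\to0^+}F_s(x,r)=\limsup_{r\to0^+,\,r\in\Q}F_s(x,r)$, and because $F_s\ge0$ this value equals $0$ exactly when $x\in D_s$. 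Therefore, restricting attention to $X\setminus U$ (which carries all of the measure, so the value of $\Phi$ on $U$ is irrelevant),
\[
D_s=\bigcap_{k\in\N}\ \bigcup_{m\in\N}\ \bigcap_{\substack{r\in\Q\\ 0<r<1/m}}\big\{x: 0<\mu(B(x,r))<\infty,\ F_s(x,r)<\tfrac1k\big\},
\]
which is $\mu$-measurable by the previous paragraph. Hence each $D_s$, and with it $\{x:\Phi(x)<t\}$, is $\mu$-measurable, completing the proof.

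The only step that is not a formality is this last reduction of the density condition, a limit over all radii $r\to0$, to a countable condition; it rests precisely on the left-continuity of $r\mapsto\mu(B(x,r))$. Everything else — the measurability of ball volumes and of the truncated integrals — is a routine application of Tonelli's theorem, the one mild subtlety being that $\mu\otimes\mu$-measurability of $g$ (rather than Borel measurability) suffices because $\mu$ is $\sigma$-finite.
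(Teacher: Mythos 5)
Your proof is correct and follows essentially the same route as the paper's (which simply defers to Federer \cite[3.1.3(2)]{F}): reduce to the super-level sets $S=\{g>s\}$, establish $\mu$-measurability of $x\mapsto\mu(B(x,r))$ and $x\mapsto\mu(\{y\in B(x,r):(x,y)\in S\})$ via Tonelli, and handle the uncountable family of radii by a countable reduction. Your write-up supplies the details (left-continuity in $r$, the null open set $U$, $\sigma$-finiteness from separability plus local finiteness) that the paper leaves to the reference, and they all check out.
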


Recall that ${\rm ap}\limsup_{y\to x}F(y)$ is the infimum of the set of numbers $a\in\mathbb{R}$ for which the set $\{y\in X:\,F(y)>a\}$ has density zero at the point $x$. 

\begin{proof}

The proof is an immediate adaptation of the proof of the analogous statement in the Euclidean case \cite[3.1.3(2)]{F} and is based on 
the fact that for any $\mu \otimes \mu$-measurable set $S$ and any fixed $\varepsilon,\delta >0$, the set
\begin{align}
\label{meas}
\bigcap_{0<r<\delta} \big \{ x \in X \ | \ \mu(\{y \ |  \ (x,y) \in S, \ y \in B(x,r)\}) < \varepsilon \mu(B(x,r)) \big \}
\end{align}
is $\mu$-measurable. To prove the measurability of the set defined above, one need to use the fact, that for every $r>0$ the function $f(x) = \mu(B(x,r))$ is lower semicontinuous and, hence, measurable. 

To obtain the measurability of the function $x\mapsto {\rm ap}\limsup_{y\to x} g(x,y)$ it is enough to use the above observation for the set $ S:= \{(x,y) \ | \ g(x,y) > t \} $ for any $t\in\R$.
\end{proof} 

Now we are ready to prove measurability of the approximate differential.

\begin{lem}
\label{measurability}
Let $(X,d,\mu)$ be a metric measure space that supports an approximate differentiable structure.
If $f:X\to\R$ is a measurable function which is approximately
differentiable at $\mu$-almost every $x\in X_\alpha$, then the
approximate differential $L^\alpha f:X_\alpha\to \R^{N(\alpha)}$ is
$\mu$-measurable on $X_\alpha$.
\end{lem}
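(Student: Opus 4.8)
The plan is to express the approximate differential, at $\mu$-almost every point, as a pointwise limit of explicitly defined $\mu$-measurable functions, combining Lemma~\ref{aplimsup} with a measurable selection among rational vectors. Fix $\alpha\in\Lambda$, write $L_\alpha$ for a Lipschitz constant of ${\bf x}_\alpha$, and let $\tilde X_\alpha\subset X_\alpha$ be the co-null $\mu$-measurable set provided by the preceding lemma, on which the approximate differential of any function is unique whenever it exists. For each $q\in\Q^{N(\alpha)}$ I set
\[
g_q(x,y)=\frac{\big|f(y)-f(x)-q\cdot({\bf x}_\alpha(y)-{\bf x}_\alpha(x))\big|}{d(x,y)}\qquad(y\neq x),
\]
extended by $0$ when $y=x$. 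Since $f$ is $\mu$-measurable, ${\bf x}_\alpha$ is Lipschitz and $d$ is continuous, $g_q$ is $\mu\otimes\mu$-measurable, so Lemma~\ref{aplimsup} makes $G_q(x):={\rm ap}\limsup_{y\to x}g_q(x,y)$ a $\mu$-measurable function on $X$ for every $q\in\Q^{N(\alpha)}$.

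The key step will be the identity
\[
G_q(x)=\Psi_x\big(q-L^\alpha f(x)\big),\qquad \Psi_x(v):={\rm ap}\limsup_{y\to x}\frac{|v\cdot({\bf x}_\alpha(y)-{\bf x}_\alpha(x))|}{d(x,y)},
\]
valid at every point $x$ where $f$ is approximately differentiable: it follows from the triangle inequality, the fact that approximate differentiability of $f$ at $x$ forces the remainder $|f(y)-f(x)-L^\alpha f(x)\cdot({\bf x}_\alpha(y)-{\bf x}_\alpha(x))|/d(x,y)$ to have approximate limit $0$, and the subadditivity of ${\rm ap}\limsup$. I will then note that $\Psi_x$ is subadditive, absolutely homogeneous, satisfies $\Psi_x(v)\le L_\alpha|v|$, and — for $x\in\tilde X_\alpha$ — vanishes only at $v=0$, since otherwise the zero function would admit two distinct approximate differentials at $x$. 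From $|\Psi_x(v)-\Psi_x(v')|\le L_\alpha|v-v'|$ the map $\Psi_x$ is continuous, so a compactness argument on the unit sphere yields a constant $c(x)>0$ with $\Psi_x(v)\ge c(x)|v|$ for all $v$. Hence, at every $x\in\tilde X_\alpha$ where $f$ is approximately differentiable and for all $q\in\Q^{N(\alpha)}$,
\[
c(x)\,\big|q-L^\alpha f(x)\big|\ \le\ G_q(x)\ \le\ L_\alpha\,\big|q-L^\alpha f(x)\big|.
\]

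Finally, I would let $W\subset\tilde X_\alpha$ be the set of points at which $f$ is approximately differentiable; it is co-null in $X_\alpha$ and $\mu$-measurable, because the estimate above lets one write $W=\bigcup_{R\in\N}\{x\in\tilde X_\alpha:\inf\{G_q(x):q\in\Q^{N(\alpha)},\,|q|\le R\}=0\}$, a countable union of zero sets of $\mu$-measurable functions intersected with $\tilde X_\alpha$. Enumerating $\Q^{N(\alpha)}=\{q_k\}_{k\ge1}$, define $k_n(x)$ to be the least index $k$ with $G_{q_k}(x)<1/n$; such an index exists by the right-hand estimate. Each set $\{x\in W:k_n(x)=m\}$ is $\mu$-measurable, being assembled from the $\mu$-measurable sets $\{G_{q_j}<1/n\}$, so $x\mapsto q_{k_n(x)}$ is a $\mu$-measurable map on $W$; and the left-hand estimate gives $|q_{k_n(x)}-L^\alpha f(x)|\le G_{q_{k_n(x)}}(x)/c(x)<1/(n\,c(x))\to0$, so $q_{k_n(x)}\to L^\alpha f(x)$ for every $x\in W$. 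As a pointwise limit of $\mu$-measurable maps, $L^\alpha f$ is $\mu$-measurable on $W$, hence, $X_\alpha\setminus W$ being $\mu$-null, $\mu$-measurable on $X_\alpha$. I expect the main obstacle to be the lower bound $\Psi_x(v)\ge c(x)|v|$ — a quantitative strengthening of the uniqueness of the approximate differential established in the preceding lemma — while the measurability bookkeeping ($\mu\otimes\mu$-measurability of $g_q$, $\mu$-measurability of $W$ and of the level sets $\{G_{q_k}<1/n\}$) is routine and rests on Lemma~\ref{aplimsup}.
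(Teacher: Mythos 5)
Your proposal is correct, and it is built from the same three ingredients as the paper's proof: the quantity $A_x(\lambda)={\rm ap}\limsup_{y\to x}|f(y)-f(x)-\lambda\cdot({\bf x}_\alpha(y)-{\bf x}_\alpha(x))|/d(x,y)$ (your $G_q$), its Lipschitz dependence on the candidate differential $\lambda$ via subadditivity of ${\rm ap}\limsup$, and Lemma \ref{aplimsup} for measurability in $x$. Where you diverge is the endgame. The paper simply shows that $(L^\alpha f)^{-1}(K)$ is measurable for every compact $K$, writing it as $\bigcap_{n}\bigcup_{\lambda\in K'}\{x: A_x(\lambda)<1/n\}$ for a countable dense $K'\subset K$; for this only the \emph{upper} bound $A_x(\lambda)\le A_x(\lambda')+C|\lambda-\lambda'|$ and the a.e.\ uniqueness of the differential are needed. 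You instead exhibit $L^\alpha f$ as a pointwise limit of countably-valued measurable selections $q_{k_n(\cdot)}$, and to make that convergence work you must prove the quantitative lower bound $\Psi_x(v)\ge c(x)|v|$ — a compactness strengthening of the uniqueness statement for the zero function on $\tilde X_\alpha$. That lower bound is correct (positivity of the continuous, homogeneous $\Psi_x$ on the unit sphere), but it is extra work that the preimage argument sidesteps entirely; on the other hand, your version yields the slightly stronger conclusion that $L^\alpha f$ is an explicit pointwise limit of simple measurable maps. Two small points to tidy up: the identification of $W$ with $\bigcup_R\{x:\inf_{|q|\le R}G_q(x)=0\}$ needs the converse inclusion (extract a convergent subsequence of near-minimizing rationals and use the Lipschitz continuity of $\lambda\mapsto A_x(\lambda)$ to produce an actual differential), and passing from measurability on $W$ to measurability on $X_\alpha$ uses completeness of the Borel regular measure — both are harmless and the paper relies on the same facts implicitly.
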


Here the value $L^\alpha f(x)$ is given
by Definition \ref{DefApDif1}, if $x$ is a point of approximate
differentiability of $f$, and $L^\alpha f(x)=\bf{0}$ otherwise.

\begin{proof}
To prove that function $l=L^\alpha f$ is measurable, we show that
$l^{-1}(K)$ is a measurable set for each compact $K\subset \R^{N(\alpha)}$.
Let $K$ be a compact set. Denote by
\[
A_x(\lambda)= {\rm ap}\limsup_{y\to
x}\frac{|f(y)-f(x)-\lambda\cdot({\bf x}_{\alpha}(y)-{\bf
x}_{\alpha}(x))|}{d(x,y)},\quad \lambda\in\R^{N(\alpha)}.
\]

Observe that, for every $x$, the function $\lambda\mapsto A_x(\lambda)$ is continuous. Indeed, since
\[
{\rm ap}\limsup_{y\to x}|g(y)+h(y)|\le {\rm ap}\limsup_{y\to
x}|g(y)|+{\rm ap}\limsup_{y\to x}|h(y)|,
\]
we have
\[
|A_x(\lambda)-A_x(\lambda')|\le{\rm ap}\limsup_{y\to
x}\frac{|(\lambda-\lambda')\cdot({\bf x}_{\alpha}(y)-{\bf
x}_{\alpha}(x))|}{d(x,y)}\le C|\lambda-\lambda'|
\]
for any $\lambda,\,\lambda'\in \R^{N(\alpha)}$.
Set
\[
E=\{x\in X_\alpha:\quad \exists \lambda\in K\quad \text{such
that}\quad A_x(\lambda)=0\}.
\]
Note that $E$ coincides with $l^{-1}(K)$. To check that $E$ is measurable, fix a dense countable subset
$K'$ of $K$. Then by the continuity of $A_x$ and the density of
$K'$ in $K$, we have
\[
E=\{x\in X_\alpha:\quad \exists (\lambda_n)_{n\in\N}\subset
K',\,\lambda\in K \quad \text{such that}\quad A_x(\lambda_n)\to
0\,\text{as}\,\lambda_n\to\lambda \}.
\]
Consequently, we can write $E$ as
\[
E=\bigcap_{n\in\N}\bigcup_{\lambda\in K'}\{x\in X_\alpha:\quad
A_x(\lambda)<\frac{1}{n} \}.
\]

To finish it remains to check that the function $x\mapsto A_x(\lambda)$ is measurable for each $\lambda\in\R^{N(\alpha)}$. This follows from Lemma \ref{aplimsup}.
\end{proof}

Next observe that the notion of approximate differentiability does not depend on the choice of the approximate differentiable structure.
\begin{lem}
\label{Lindstr}
Let $(X,d,\mu)$ be a metric measure space that admits an approximate differentiable structure $\{(X_\alpha,\bf{x}_\alpha)\}_{\alpha\in \Lambda}$.
If $f: X \to \R$ is approximately differentiable $\mu$-a.e. on $X$ with respect to $\{(X_\alpha,\bf{x}_\alpha)\}_{\alpha\in \Lambda}$ then
it is approximately differentiable almost everywhere with respect to any
approximate differentiable structure defined on $X$.
\end{lem}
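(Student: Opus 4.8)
The plan is to work one local chart of each structure at a time, using the fact that the coordinate functions of one differentiable structure, being Lipschitz, are approximately differentiable with respect to the other. Keep the given structure $\{(X_\alpha,{\bf x}_\alpha)\}_{\alpha\in\Lambda}$ and let $\{(Y_\beta,{\bf y}_\beta)\}_{\beta\in\Theta}$ be an arbitrary approximate differentiable structure on $X$. Since $\mu\bigl(X\setminus\bigcup_\beta Y_\beta\bigr)=0$ and $\mu\bigl(X\setminus\bigcup_\alpha X_\alpha\bigr)=0$, it suffices to fix $\alpha\in\Lambda$ and $\beta\in\Theta$ and prove that $f$ is approximately differentiable with respect to $(Y_\beta,{\bf y}_\beta)$ at $\mu$-a.e.\ point of the overlap $X_\alpha\cap Y_\beta$; the countably many exceptional null sets are collected at the end.

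First I would apply the defining property of the structure $\{(Y_\beta,{\bf y}_\beta)\}$ — the approximate-limit version of $(iii)$ in Theorem~\ref{Cheeger} — to each Lipschitz coordinate $x_\alpha^1,\dots,x_\alpha^{N(\alpha)}$ of ${\bf x}_\alpha$. This produces, for $\mu$-a.e.\ $x\in Y_\beta$, an $N(\alpha)\times N(\beta)$ matrix $D(x)$ whose $i$-th row is the approximate differential $d^\beta x_\alpha^i(x)\in\R^{N(\beta)}$, and which satisfies
\[
\ap\lim_{y\to x}\frac{|{\bf x}_\alpha(y)-{\bf x}_\alpha(x)-D(x)\,({\bf y}_\beta(y)-{\bf y}_\beta(x))|}{d(x,y)}=0,
\]
where I use that an approximate limit of a finite family of functions can be read off on the intersection of the corresponding density-one sets, which is again a density point. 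I would then discard from $X_\alpha\cap Y_\beta$ the $\mu$-null set outside of which this relation holds and, simultaneously, $f$ is approximately differentiable with respect to $(X_\alpha,{\bf x}_\alpha)$ with differential $L^\alpha f(x)$ — the latter exceptional set being null by hypothesis.

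For such a point $x$ the candidate approximate differential of $f$ with respect to $(Y_\beta,{\bf y}_\beta)$ is the composed row vector $M(x):=L^\alpha f(x)\,D(x)\in\R^{N(\beta)}$, and the verification reduces to the triangle inequality
\[
|f(y)-f(x)-M(x)\,({\bf y}_\beta(y)-{\bf y}_\beta(x))|\le|f(y)-f(x)-L^\alpha f(x)\,({\bf x}_\alpha(y)-{\bf x}_\alpha(x))|+|L^\alpha f(x)|\,|{\bf x}_\alpha(y)-{\bf x}_\alpha(x)-D(x)\,({\bf y}_\beta(y)-{\bf y}_\beta(x))|,
\]
after which division by $d(x,y)$ and intersection of the two density-one sets on which the two right-hand terms tend to zero shows that $f$ is approximately differentiable at $x$ with respect to $(Y_\beta,{\bf y}_\beta)$ with approximate differential $M(x)$. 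Taking the union over $\alpha$ and then over $\beta$ finishes the proof.

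The point I expect to need care is that no invertibility of $D(x)$ — and hence no coincidence of the local dimensions $N(\alpha)$ and $N(\beta)$ — is available or needed: the trick is to differentiate the coordinates of the first structure \emph{against} the second, rather than to invert a change of variables, so the argument survives even when the two structures have different local dimensions. Beyond that, the only thing to watch is the bookkeeping with density-one sets — at each step (in assembling $D(x)$ and in the final estimate) only finitely many of them are intersected, so the intersection is still a density point of the relevant good set.
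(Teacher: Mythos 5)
Your proposal is correct and takes essentially the same route as the paper: the paper applies the second structure to the single Lipschitz function $g_x=L^\alpha f(x)\cdot{\bf x}_\alpha$ and uses the identical triangle inequality, while you differentiate the coordinates $x_\alpha^i$ individually and assemble the matrix $D(x)$, which by linearity yields the same candidate differential $M(x)=L^\alpha f(x)\,D(x)=L^\beta g_x(x)$. Your variant has the minor merit of making explicit why the exceptional null set does not depend on $x$, a point the paper only asserts.
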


\begin{proof} Let $\{(X_\alpha, \bf{x}_\alpha)\}_{\alpha\in \Lambda}$ and $\{(Y_\beta, \bf{y}_\beta)\}_{\beta\in B}$ be
two approximate differentiable structures defined on $(X,d,\mu)$.
We will write
$L^{\alpha}_{\bf x}f$ for the approximate differential of $f$ with respect to (w.r.t)
$\{(X_\alpha, \bf{x}_\alpha)\}_{\alpha\in \Lambda}$ at $ x\in X_\alpha $.


First we notice that for fixed $x$ the real valued function $g_x(\cdot) =L^{\alpha}_{\bf x} f(x) \cdot {\bf x}_\alpha(\cdot)$ is Lipschitz continuous
on $X$, thus it is approximately differentiable $\mu$-a.e. w.r.t. $\{(Y_\beta, \bf{y}_\beta)\}_{\beta\in B}$.
Moreover the set of points where $g_x$ is approximately differentiable does not depend on the choice of $x$.

Let $\beta\in \bf{B}$. For almost every $x\in X_\beta$ we can choose
$\alpha\in \bf{\Lambda}$ such that $x\in X_\alpha$ and $x$ is a point of approximate differentiability of $f$ w.r.t. $(X_\alpha, \bf{x}_\alpha)$.
Thus $\mu$-a.e. $x\in X_\beta$ we have
\begin{align*}
|f(y) - f(x) - L_{\bf y}^\beta g_x(x)({\bf y}_\beta(y) -  {\bf y}_\beta(x))| &\le  |f(y) - f(x) - L^\alpha_{\bf x} f(x)({\bf x}_\alpha(y) - {\bf x}_\alpha(x))| \\
& 
+ |g_x(y) - g_x(x) - L^\beta_{\bf y}g_x(x) ({\bf y}_\beta(y) - {\bf y}_\beta(x))|.
\end{align*}

Obviously the set
\[
A_{x,\varepsilon}:=\{y\in B(x,r): \ |f(y) - f(x) - L^\beta_{\bf y}g_x(x)({\bf y}_\beta(y) - {\bf y}_\beta(x))|<\varepsilon{d(x,y)} \}
\]
contains the intersection of the sets
\[
\{y\in B(x,r): \ |f(y) - f(x) - L^\alpha_{\bf x} f(x)({\bf x}_\alpha(y) - {\bf x}_\alpha(x))| <\frac{\varepsilon}{2}{d(x,y)} \}
\]
and
\[
\{y\in B(x,r):  \ |g_x(y) - g_x(x) - L^\beta_{\bf y}g_x(x) ({\bf y}_\beta(y) - {\bf y}_\beta(x))|<\frac{\varepsilon}{2}{d(x,y)} \}.
\]
Therefore each $x$ which is a point of approximate differentiability of $f$ w.r.t. $(X_\alpha, {\bf x}_\alpha)$ and a point of approximate
differentiability of
$L^\alpha_{\bf x} f(x)\cdot\bf{x}_\alpha$ w.r.t. $(Y_\beta, \bf{y}_\beta)$ is a point of density of $A_{x,\varepsilon}$. We conclude that $f$ is approximately differentiable a.e. on $X$
with respect to $\{(Y_\beta, \bf{y}_\beta)\}_{\beta\in B}$.
\end{proof}

\begin{rem}\label{weakstrong}
Definition \ref{DefApDif1} of approximate differentiability  makes sense whenever the underlying space supports an approximate differentiable structure. If we
additionally assume that the measure $\mu$ is doubling then an
approximate differentiable structure turns out to be a strong
measurable differentiable structure as well, see \cite[Prop.
3.5]{Ke2}. Since the results presented later on are formulated under the
assumption that the measure $\mu$ is doubling we, in fact, deal
with a strong measurable differentiable structure.

\end{rem}

\section{Characterization of approximate differentiability} \label{Characterization}
\subsection{Whitney-type characterization of approximate differentiability}
The proof of the following theorem is strongly inspired by the original proof of Whitney for the case $X=\R^n$, see \cite[Theorem 1]{Wh}.


\begin{thm}\label{main}
Let $(X,d,\mu)$ be a complete metric measure space, where $\mu$ is a doubling measure and let $\{(X_\alpha,\bf{ x}_{\alpha})\}_{\alpha\in \Lambda}$ be an approximate differentiable structure on $(X,d,\mu)$.
Suppose that $E\subset X$ and $f:E\to\R$ is a $\mu$-measurable function. Then the following conditions are equivalent:
\begin{itemize}
\item[$\text{\em(a)}$] $f$ is an approximately differentiable $\mu$-a.e. in $E$.
\item[$\text{\em(b)}$]
For any $\veps>0$ there is a closed set $F\subset E$ such that $\mu(E\setminus F)<\veps$ and $f_{|F}$ is locally Lipschitz.
\item[$\text{\em(c)}$] $f$ induces a Luzin decomposition of $E$, that is,
\begin{equation}\label{LuzinDecomposition}
E=\bigcup_{j=1}^{\infty}E_j\cup Z,
\end{equation}
where $E_i$ are measurable sets, $f_{|E_i}$ are Lipschitz
functions and $Z$ has measure zero.
\end{itemize}
\end{thm}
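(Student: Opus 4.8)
The plan is to prove the cycle of implications $(a)\Rightarrow(b)\Rightarrow(c)\Rightarrow(a)$, following the Euclidean template of Whitney but replacing Euclidean balls and Lipschitz estimates by their metric analogues, and using the Cheeger coordinates ${\bf x}_\alpha$ in place of the identity chart on $\R^n$.

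\smallskip

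\noindent\textbf{Step 1: $(a)\Rightarrow(b)$.} Assume $f$ is approximately differentiable $\mu$-a.e.\ in $E$. Since $\mu$ is doubling, by Remark \ref{weakstrong} the structure is also a strong measurable differentiable structure, so by Lemma \ref{measurability} the approximate differential $L^\alpha f$ is $\mu$-measurable on each $X_\alpha$. Fix $\veps>0$. First use Theorem \ref{Cheeger}(i) and countable additivity to discard a set of measure $<\veps/2$ so that we may work inside a single $X_\alpha$; decompose $E\cap X_\alpha$ into the countably many pieces on which the approximate differential and the function are controlled. The main construction is, for each $x$ in the good set, to exploit that the set
\[
A_{x,\veps}=\Big\{y:\ |f(y)-f(x)-L^\alpha f(x)\cdot({\bf x}_\alpha(y)-{\bf x}_\alpha(x))|<\veps\,d(x,y)\Big\}
\]
has $x$ as a density point, and to quantify this: by Egorov-type reasoning (using the doubling property and the Lebesgue differentiation theorem, valid by \cite[1.8]{He}) there is a measurable set $G$ with $\mu((E\cap X_\alpha)\setminus G)$ small, a radius $r_0>0$ and a modulus $\eta$ such that for all $x\in G$ and $0<r<r_0$,
\[
\mu\big(B(x,r)\setminus A_{x,\eta(r)}\big)\le \tfrac12\,\mu(B(x,r)),\qquad \eta(r)\to 0 \text{ as } r\to 0 .
\]
Then I would show that on $G$ (after a further reduction to a closed subset via inner regularity, and on which $x\mapsto L^\alpha f(x)$ is bounded and ${\bf x}_\alpha$ is globally $L$-Lipschitz), for any two points $x,x'$ the two density balls $B(x,r)\cap A_{x,\eta(r)}$ and $B(x',r')\cap A_{x',\eta(r')}$ must intersect when $r,r'\sim d(x,x')$, and on the intersection point $y$ the triangle inequality gives
\[
|f(x)-f(x')|\le |f(x)-f(y)|+|f(y)-f(x')|\lesssim \big(\|L^\alpha f\|_\infty L + \eta(Cd(x,x'))\big)\,d(x,x').
\]
Since $\eta$ is bounded near $0$ this yields a local Lipschitz bound for $f$ on $G$; passing to a closed subset $F$ by regularity of $\mu$ finishes this implication.

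\smallskip

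\noindent\textbf{Step 2: $(b)\Rightarrow(c)$.} This is the soft part. Apply $(b)$ with $\veps=1,\tfrac12,\tfrac13,\dots$ to obtain closed sets $F_k\subset E$ with $f|_{F_k}$ locally Lipschitz and $\mu(E\setminus F_k)<1/k$. Each $F_k$ is $\sigma$-compact-like in the sense that, since $X$ is separable, we can cover $F_k$ by countably many balls on which $f|_{F_k}$ is genuinely Lipschitz, intersect with $F_k$, and thereby write $F_k$ as a countable union of (measurable, indeed closed-in-$F_k$, hence measurable) sets on which $f$ is Lipschitz. Re-enumerate the resulting countable doubly-indexed family as $E_1,E_2,\dots$, and set $Z=E\setminus\bigcup_k F_k$, which has $\mu(Z)\le \inf_k 1/k=0$. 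This gives the decomposition \eqref{LuzinDecomposition}.

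\smallskip

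\noindent\textbf{Step 3: $(c)\Rightarrow(a)$.} Given the decomposition, it suffices to show that on a single set $E_j$ with $f|_{E_j}$ Lipschitz, $f$ is approximately differentiable at $\mu$-a.e.\ density point of $E_j$ (the union of the countably many exceptional sets, together with $Z$, still has measure zero). By McShane extension, $f|_{E_j}$ extends to a Lipschitz function $\tilde f$ on all of $X$. By Theorem \ref{Cheeger}(iii) (using again that, under the doubling hypothesis, the approximate structure is a strong one, so Rademacher's theorem \eqref{diff} applies), $\tilde f$ is Cheeger differentiable $\mu$-a.e.\ on each $X_\alpha$, hence in particular the \emph{true} limit \eqref{diff} holds for $\tilde f$ at $\mu$-a.e.\ $x\in E_j\cap X_\alpha$. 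At such an $x$ which is also a density point of $E_j$, set $L^\alpha f(x):=d^\alpha\tilde f(x)$. Since $f=\tilde f$ on $E_j$ and $x$ is a density point of $E_j$, the set
\[
\Big\{y:\ |f(y)-f(x)-d^\alpha\tilde f(x)\cdot({\bf x}_\alpha(y)-{\bf x}_\alpha(x))|<\veps\,d(x,y)\Big\}
\]
contains $E_j$ intersected with the corresponding set for $\tilde f$, and the latter contains a full-density neighborhood of $x$ because the ordinary limit for $\tilde f$ vanishes; the intersection of a set with $x$ as a density point and $E_j$ with $x$ as a density point again has $x$ as a density point. Hence $x$ is a density point of $A_{x,\veps}$ for every $\veps>0$, i.e.\ \eqref{eqApDif1} holds, so $f$ is approximately differentiable at $x$. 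By Lemma \ref{Lindstr} this is independent of the chosen structure.

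\smallskip

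\noindent\textbf{Main obstacle.} The delicate implication is $(a)\Rightarrow(b)$: unlike in $\R^n$, one cannot simply invoke a Lipschitz extension from the approximate derivative estimate pointwise — the coordinate functions ${\bf x}_\alpha$ need not be injective and the geometry is only controlled through the doubling/Poincar\'e package. The crux is the Egorov-plus-doubling argument producing a \emph{uniform} radius $r_0$ and modulus $\eta$ on a large set, and the ``intersecting density balls'' lemma that converts density-point information at two nearby points into a two-point Lipschitz estimate for $f$; this replaces the Vitali-type covering and the classical Whitney extension used in \cite{Wh}.
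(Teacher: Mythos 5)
Your proposal follows essentially the same route as the paper: the cycle $(a)\Rightarrow(b)\Rightarrow(c)\Rightarrow(a)$, with $(a)\Rightarrow(b)$ proved by a Luzin/Egorov reduction to a closed set on which $L^\alpha f$ is bounded and the density convergence is uniform, followed by the observation that the good sets $A_{x,\cdot}$ and $A_{y,\cdot}$ of two nearby points must meet inside $B(x,r)\cap B(y,r)$, then $(b)\Rightarrow(c)$ by exhaustion and localization to countably many balls, and $(c)\Rightarrow(a)$ by McShane extension plus the density-point argument. The one quantitative slip is the threshold $\tfrac12$ in your Egorov step: in a general doubling space $\mu(B(x,r)\cap B(y,r))$ is only bounded below by $C_\mu^{-1}\mu(B(x,r))$, which may be far smaller than $\tfrac12\mu(B(x,r))$, so two exceptional sets of relative measure $\tfrac12$ could cover the whole intersection; as in the paper one must take the threshold to be a constant such as $a/C_\mu$ with $a=1/(2C_\mu)$, and note also that a fixed tolerance $\eta=1$ already suffices for the Lipschitz bound, so the uniform vanishing modulus $\eta(r)\to 0$ you posit is unnecessary (and would need a diagonal Egorov argument to justify).
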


\begin{rem}
When $E\subset X$ is a bounded set, condition $(b)$ can be replaced by:

$(b')$ {\it For any $\veps>0$ there exists a closed set $F\subset E$ and a Lipschitz function $g:X\to \R$ such that $\mu(E\setminus F)<\veps$ and $f_{|F}=g$.}

\noindent
To show that the function $f$ is globally Lipschitz on $F$, one needs to notice that, since $X$ is proper, the set $F$ is compact.
Then one can extend $f$ to the whole space by standard arguments.

\end{rem}


\begin{proof}[Proof of Theorem \ref{main}] Without loss of generality, we can consider all coordinate functions ${\bf x}_\alpha$ to be Lipschitz continuous with a Lipschitz constant equal to one, since clearly $\{(X_\alpha, \frac{\bf{ x}_{\alpha}}{\textrm{LIP}({\bf x}_{\alpha})})\}_{\alpha\in {\bf\Lambda}}$ is an approximate differentiable structure on $X$, and $f$ is approximately differentiable w.r.t. the structure. Here $\textrm{LIP}({\bf x}_{\alpha})$ denotes the Lipschitz constant of ${\bf x}_{\alpha}$.


Let $f$ be approximately differentiable $\mu$-a.e. in $E$.
We can assume that the sets $X_\alpha$ are pairwise disjoint and extend $L^\alpha f$ by zero outside $X_\alpha$.  Denote by $N$ the bound on the dimension given by Theorem \ref{Cheeger}.
Consider $L^\alpha f(x)$ as vectors in $\mathbb{R}^N$ (we extend the vector with zeros when necessary) and let $Lf=\sum_{\alpha}L^\alpha f$.
If a function is Cheeger differentiable $\mu$-a.e. on $X$ the analogue construction would give a ``gradient'' for $f$.
This construction is quite standard in the literature, see e.g. \cite{Bj}, \cite{BBS2}.


{(a)$\Longrightarrow$(b)}

First, assume that $E\subset X$ is a bounded set.

Define
\[D=\{x\in E: f\text{ is approximately differentiable at } x\}.
\]

First we show that for any $\veps>0$ there exists a closed set $F=F_{\veps}\subset D$, $\delta>0$ and $L>0$ such that
$\mu(D\setminus F)<\veps$ and
$$
|f(x)-f(y)|\leq L\, d(x,y) \text{ for each } x,y\in F,\, d(x,y)<\delta.
$$

Since $\mu$ is a doubling measure,
we have for any $r>0$, $x,y\in X$ such that $d(x,y)\leq r/2$ that
\begin{equation}\label{eq0}
\mu(B(x,r)\cap B(y,r))\geq \mu(B(x,r/2))\geq 2\,a\,\mu (B(x,r)),
\end{equation}
where $a=1/2C_{\mu}$, and $C_{\mu}$ denotes the doubling constant.
\noindent

For each $\eta>0$ define the following sequence of functions:
\[
\psi^\eta_i(x)=\mu(B(x,1/i)\setminus A_{x,\eta})\quad x\in
D,\quad i\in\N,
\]
where $A_{x, \eta}$ is given by formula \eqref{good}. It is clear
that for each $i\in\N$, the function $\psi^\eta_i(x)$ is measurable in $x$ for fixed $\eta$.
Moreover, for each $\eta>0$ and $x\in D$ one has
\begin{equation}\label{eq1}
\phi_i^\eta(x)=\frac{\psi^\eta_i(x)}{\mu(B(x,1/i))}\to
0\quad\text{ as }\quad i\to\infty.
\end{equation}

Next set $\eta=1$. By Luzin's and Egoroff's theorem there exists a closed set $F\subset E$ such that
\begin{itemize}
\item[$(i)$]$\mu(E\setminus F)=\mu(D\setminus F)<\veps$,
\item[$(ii)$] $Lf_{|F}$ is continuous, moreover, since $X$ is proper, $Lf_{|F}$  is bounded in $F$, i.e. $|{Lf}_{|F}|\leq C$,
\item[$(iii)$] and $\phi^1_i\to 0$ uniformly on $F$.
\end{itemize}
Now choose $i_0$ such that
\begin{equation}\label{eq3}
\phi^1_i(x)<\frac{a}{C_\mu},\quad x\in F,\quad i\geq i_0,
\end{equation}
where $C_\mu\geq 1$ is the doubling constant.

Fix $x,y\in F$ such that $d(x,y)<1/(2i_{0})$. 
Points $x$ and $y$ may belong to two different charts, thus we write $x\in X_{\alpha} $ and $y\in X_{\beta}$.
Choose $i\geq i_0$ such that
\[
1/(2i+2)<d(x,y)\le 1/(2i).
\]
For such $i$ we have that $i \geq i_0$ and by \eqref{eq3}, we get that
\[
\psi^1_i(y)<\frac{a}{C_\mu}\mu(B(y,1/i))\le\frac{a}{C_\mu}\mu(B(x,2/i))\le a\mu(B(x,1/i)).
\]
Hence
\begin{equation}\label{eq4}
\psi^1_{i}(x)< a \mu(B(x,1/i))\quad\text{and}\quad\psi^1_{i}(y)< a \mu(B(x,1/i)).
\end{equation}

Combining \eqref{eq0} and \eqref{eq4}, we deduce that there exists a point $z\in B(x,1/i)\cap B(y,1/i)$ which does not belong either to
$B(x,1/i)\setminus A_{x,1}$ or $B(y,1/i)\setminus A_{y,1}$. For such point $z$, we have that $d(x,z)<1/i$, $d(y,z)<1/i$,
\begin{equation}\label{eq5}
\frac{|f(z)-f(x)-L^{\alpha}f(x)\cdot({\bf x}_{\alpha}(z)-{\bf
x}_{\alpha}(x))|}{d(z,x)}<{1}
\end{equation}
and
\begin{equation}\label{eq6}
\frac{|f(z)-f(y)-L^{\beta}f(y)\cdot({\bf x}_{\beta}(z)-{\bf
x}_{\beta}(y))|}{d(z,y)}<1.
\end{equation}
By combining  \eqref{eq5}, \eqref{eq6}, $d(y,z)<5d(x,y)$ and $d(x,z)<5d(x,y)$ we obtain the inequality
\begin{equation*}
\begin{split}
|f(y)-f(x)|\leq &|f(z)-f(x)-L^{\alpha}f(x)\cdot({\bf x}_{\alpha}(z)-{\bf
x}_{\alpha}(x))|\\
&+|f(z)-f(y)-L^{\beta}f(y)\cdot({\bf x}_{\beta}(z)-{\bf
x}_{\beta}(y))| \\
&+|L^{\alpha}f(x)\cdot({\bf x}_{\alpha}(z)-{\bf
x}_{\alpha}(x))|+|L^{\beta}f(y)\cdot({\bf x}_{\beta}(z)-{\bf
x}_{\beta}(y))|\\
&\leq  10d(x,y)+ C d(x,y),
\end{split}
\end{equation*}
which shows that $f_{|F}$ is a locally Lipschitz function and finishes the proof of the implication for the case in which $E \subset X$ is a bounded set.

Let now $E$ be an arbitrary subset of $X$. Fix any point $x_0\in X$ and consider a family of open balls
$
B_j=B(x_0,j),\, j=1,2,\ldots,
$
covering $X$.

Apply now the above reasoning to get closed sets $F_j\subset E\cap B_j$ such that
\[
\mu\big((E\cap B_j)\setminus F_j\big)\le 2^{-j}\varepsilon
\]
and $f_{|F_j}$ are locally Lipschitz functions. Set
\[
F=X\setminus \bigcup\limits_{j=1}^{\infty}(B_j\setminus F_j),
\]
then
\[
\mu(E\setminus F)=\mu(E\cap F^c)=\mu\bigg(E\cap \bigcup\limits_{j=1}^{\infty}(B_j\setminus F_j)\bigg)\le\sum\limits_{j=1}^{\infty}\mu(E\cap (B_j\setminus F_j))\le\varepsilon.
\]
It is easy to see that $F$ is a closed set and $F\cap B_j\subset F_j$ for every $j=1,2,\dots$.
Hence, $F\subset E$ and $f_{|F}$ is locally Lipschitz.
The last observation follows from the fact that for every point of $F$ we can find a neighborhood of $x$ contained in some $B_j$ and $f_{|F_j}$ is
locally Lipschitz.

\noindent
(b)$\Longrightarrow$(c) For each $i \in \N$ there exists a closed set $F_i$ such that $\mu(E\setminus F_i)\le 1/i$ and $f_{|F_i}$ is locally Lipschitz.
Setting $\tilde F_k:= \bigcup_{i=1}^k F_i$ we obtain an ascending family of closed sets, such that $f$ is locally Lipschitz on each of its members. We define $Z:=\bigcap_{k=1}^\infty(E\setminus \tilde F_k)$. Then,
\[
\mu(Z) = \lim_{k\to \infty} \mu(E\setminus \tilde F_k) = 0.
\]
Let $\{B_i\}_{i=1}^\infty$ denote a countable family of balls covering $X$. Then
\[
E= Z \cup \bigcup_{i=1}^\infty \bigcup_{k=1}^\infty \tilde F_k \cap B_i=Z \cup\bigcup_{j=1}^\infty F'_j,
\]
where the family $\{F'_j\}^\infty_{j=1}$ is obtained just by renumerating the family $\{F_k \cap B_i\}$. Observe that $f_{|F'_j}$ is Lipschitz.
To finish the proof we take the disjoint sets as follows: $E_{1}:=F_1'$ and $E_{j}:= F_j'\setminus\bigcup_{m<j}E_m$, $j>1$.

\noindent
(c)$\Longrightarrow$(a)
If decomposition \eqref{LuzinDecomposition} holds, then the restriction $f_{|E_{i}}$ is Lipschitz for every $i$. Using Mc Shane's theorem we can extend $f_{|E_{i}}$ to a Lipschitz function $\tilde f_i$ defined on the whole space $X$.
By the definition of the approximate differentiable structure, $\tilde f_i$ is $\mu$-a.e approximately differentiable on $X$.
Since every point of a measurable set $E_i$ is its point of density, $f$ is also $\mu$-a.e approximately differentiable on $E_{i}$.
\end{proof}

\subsection{Stepanov-type characterization} \label{stepasection}

The following Stepanov-type theorem shows that an approximate local growth condition on a function guarantees its approximation by Lipschitz functions in Luzin sense.

\begin{thm}\label{StepanovEqivalLuzin}
Let $\mu$ be a doubling Borel measure. A $\mu$-measurable function $f:E\to\mathbb{R}$ defined on a measurable subset $E\subset X$  satisfies the condition
\begin{equation}\label{ApStepanovCondition}
{\rm ap}\limsup_{y\to x}\frac{|f(y)-f(x)|}{d(x,y)}<\infty
\end{equation}
for $\mu$-a.e $x$ in $E$ if and only if for any $\varepsilon>0$ there is a closed set $G\subset E$ such
that $\mu(E\setminus G)\le\varepsilon$ and $f$ is locally Lipschitz on $G$.
\end{thm}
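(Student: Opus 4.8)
The plan is to prove the two implications separately. The implication from the Luzin property to \eqref{ApStepanovCondition} is the easy one, and I would argue it via the density point theorem for doubling measures: applying the hypothesis with $\varepsilon=1/n$ yields closed sets $G_n\subset E$ with $\mu(E\setminus G_n)\le 1/n$ on which $f$ is locally Lipschitz; if $x$ is a density point of $G_n$ (which is the case for $\mu$-a.e.\ $x\in G_n$) and $f$ is $L$-Lipschitz on $G_n\cap B(x,\delta)$, then the set $\{y:|f(y)-f(x)|\le L\,d(x,y)\}$ contains $G_n\cap B(x,\delta)$, hence has density $1$ at $x$, so \eqref{ApStepanovCondition} holds at $x$. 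Therefore \eqref{ApStepanovCondition} can fail only on $\bigcap_n N_n$, where $N_n$ is the union of $E\setminus G_n$ and the $\mu$-null set of non-density points of $G_n$; since $\mu(N_n)\le 1/n$, this set is $\mu$-null.

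For the converse I would follow the proof of the implication (a)$\Rightarrow$(b) of Theorem \ref{main} almost verbatim, with a truncation level of $f$ taking over the role of the Cheeger differential, and treating a bounded $E$ first. Let $D\subset E$ be the ($\mu$-measurable, full-measure) set where \eqref{ApStepanovCondition} holds. For $k\in\N$ and $x\in D$ put $A_{x,k}=\{y:|f(y)-f(x)|\le k\,d(x,y)\}$ and $D_k=\{x\in D:\ A_{x,k}\text{ has density }1\text{ at }x\}$. Then $D=\bigcup_k D_k$ is an increasing union, and each $D_k$ is $\mu$-measurable by the argument used for the set \eqref{meas} in the proof of Lemma \ref{aplimsup}, applied to $S_k=\{(x,y):|f(y)-f(x)|>k\,d(x,y)\}$. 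Given $\varepsilon>0$, fix $k$ with $\mu(D\setminus D_k)<\varepsilon/2$ (possible since $\mu(D)<\infty$ when $E$ is bounded). The functions $\phi_i(x)=\mu(B(x,1/i)\setminus A_{x,k})/\mu(B(x,1/i))$ are then $\mu$-measurable on $D_k$, exactly as the corresponding functions in the proof of Theorem \ref{main}, and $\phi_i(x)\to 0$ as $i\to\infty$ for each $x\in D_k$; so by Egoroff's theorem there is a closed set $F\subset D_k$ with $\mu(D_k\setminus F)<\varepsilon/2$ on which $\phi_i\to 0$ uniformly, and I can pick $i_0$ with $\phi_i(x)<a/C_\mu$ for all $x\in F$ and $i\ge i_0$, where $a=1/(2C_\mu)$.

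From here the argument of Theorem \ref{main} applies, and is in fact simpler. Given $x,y\in F$ with $d(x,y)<1/(2i_0)$, choose $i\ge i_0$ with $1/(2i+2)<d(x,y)\le 1/(2i)$; then, exactly as in the proof of Theorem \ref{main} from \eqref{eq0} and the doubling property, the sets $B(x,1/i)\setminus A_{x,k}$ and $B(y,1/i)\setminus A_{y,k}$ together have measure strictly less than $\mu(B(x,1/i)\cap B(y,1/i))$, so there is $z\in B(x,1/i)\cap B(y,1/i)$ lying in both $A_{x,k}$ and $A_{y,k}$. Since $d(x,z),d(y,z)<1/i<4\,d(x,y)$ while $|f(z)-f(x)|\le k\,d(x,z)$ and $|f(z)-f(y)|\le k\,d(y,z)$, the triangle inequality gives $|f(x)-f(y)|<8k\,d(x,y)$; thus $f$ is locally Lipschitz on the closed set $F\subset E$, with $\mu(E\setminus F)\le\varepsilon$. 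For an unbounded $E$ I would finish exactly as in Theorem \ref{main}, covering $X$ by balls $B_j=B(x_0,j)$, applying the bounded case to each $E\cap B_j$ to get closed sets $F_j$ with $\mu((E\cap B_j)\setminus F_j)\le 2^{-j}\varepsilon$, and setting $G=X\setminus\bigcup_j(B_j\setminus F_j)$.

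The only genuinely new point relative to Theorem \ref{main} is the passage to the sets $D_k$: there the Cheeger differential $Lf$ was a measurable function which could be made bounded on a large closed set, so that a single ``slope'' worked, whereas here that role is played by the integer $k$, and one must first ensure that a single $k$ already serves on a set of almost full measure. This is the step I expect to require the most care, and it is exactly where the measurability of $D_k$, obtained from the slicing argument of Lemma \ref{aplimsup}, is needed. Everything else is a simpler repetition of the proof of Theorem \ref{main}, since \eqref{ApStepanovCondition} bounds $|f(z)-f(x)|$ directly and no differential terms enter.
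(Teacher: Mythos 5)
Your proposal is correct, and both directions rest on the same mechanism as the paper's: the reverse implication via density points of the closed sets, and the forward implication via the two-ball chaining argument (a common point $z\in B(x,r)\cap B(y,r)$ avoiding both exceptional sets, which exists because of \eqref{eq0}). Where you genuinely differ is in how the approximate growth condition is made uniform over scales. The paper, following Federer, defines in one stroke the increasing measurable sets $A_j=E\cap\{u:\ \mu(Q(u,r,j))<a\mu(B(u,r))\ \text{for all}\ 0<r<1/j\}$, so that both the Lipschitz bound $j$ and the uniform smallness of the exceptional sets at every scale below $1/j$ are built into the definition; it then passes to a closed subset by inner regularity, with no appeal to Egoroff. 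You instead first fix the slope $k$ by exhausting $D$ with the sets $D_k$, on which only the asymptotic density condition is imposed, and then recover uniformity in the scale by Egoroff, exactly as in the proof of Theorem \ref{main}. Both schemes need the slicing argument behind \eqref{meas} for measurability (of $A_j$ in the paper, of $D_k$ and of the quotients $\phi_i$ in your version), and both end with the same estimate up to constants ($8k$ for you versus $4j$ in the paper). Your route costs one extra application of Egoroff and an extra exhaustion step (choosing $k$ with $\mu(D\setminus D_k)$ small, which indeed requires the preliminary reduction to bounded $E$), but it recycles the machinery of Theorem \ref{main} verbatim and, by treating bounded $E$ first and then exhausting by the balls $B_j$, handles the case $\mu(E)=\infty$ explicitly --- a point that the paper's passage from $\mu(E\setminus\bigcup_j A_j)=0$ to the choice of a single large $j$ leaves implicit. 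One cosmetic remark: the paper's bad set $Q(u,r,j)$ explicitly includes the points outside $E$, whereas your $A_{x,k}$ tacitly lives inside $E$; this is harmless since $\mu$-a.e.\ point of $E$ is a density point of $E$, but it is worth stating.
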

The proof is an adaptation of arguments in \cite[Theorem 3.1.8]{F} to the metric setting.

\begin{proof}
First assume that condition \eqref{ApStepanovCondition} holds. Define for each positive integer $j$ the set
\[
Q(u,r,j)=B(u,r)\cap \{x:\,x\notin E\,\,\text{or}\,\, |f(x)-f(u)|> jd(x,u)\},
\]
whenever $u\in E$ and $r>0$.
Define also the set
\[
A_j=E\cap \{u:\, \mu(Q(u,r,j))<a\mu(B(u,r))\,\,\,\text{for}\,\,0<r<1/j\},
\]
where $a>0$ is some constant for which \eqref{eq0} holds.
Then each set $A_j$ is measurable, which follows from measurability of the sets defined by \eqref{meas}),  and
\begin{equation}\label{eqInprofOfStepanovTh}
\mu\bigg(E\setminus\bigcup\limits_{j=1}^\infty A_j\bigg)=0.
\end{equation}

Observe that if $u,v\in A_j$ and $d(u,v)<1/2j$, then
\[|f(u)-f(v)|\le 4jd(u,v).\]
Indeed, set $r=2d(u,v)$, then by the definition of the sets $Q$ and by inequality \eqref{eq0}
\[
\mu(Q(u,r,j)\cup Q(v,r,j))<a(\mu(B(u,r))+\mu(B(v,r)))\le \mu(B(u,r)\cap B(v,r)).
\]
Thus, we can choose point $x\in\big(B(u,r)\cap B(v,r)\big)\setminus \big(Q(u,r,j)\cup Q(v,r,j)\big)$, and we have
\[
|f(u)-f(v)|\le|f(u)-f(x)|+|f(v)-f(x)|\le j(d(x,u)+d(x,v))\le 2jr=4jd(u,v).
\]

It follows from the last inequality that $f$ is locally Lipschitz on every $A_j$.
Since the sequence of sets $A_j$, $j=1,2,\dots$ is increasing, the measure $\mu$ is Borel regular and equality \eqref{eqInprofOfStepanovTh} holds,
for any $\varepsilon>0$ we can choose a closed set $G\subset E$ such that $\mu(E\setminus G)\le \varepsilon$ and $f|_G$ is a locally Lipschitz function.


Let us show the reverse implication. Let $G$ be a closed set such that $\mu(E\setminus G)\le \varepsilon$ and $f|_G$ is a locally Lipschitz function.
Then $X\setminus G$ has density zero at $\mu$-almost every point of $G$. Thus, \eqref{ApStepanovCondition} holds $\mu$-a.e. in $G$ and
the fact $\varepsilon$ can be chosen arbitrary small finishes the proof.
\end{proof}

%

As a corollary of Theorem \ref{main} and Theorem \ref{StepanovEqivalLuzin}, we get the following characterization of approximate differentiability.

\begin{cor}\label{StepanovTypeCharact}
Under the hypothesis of Theorem \ref{main}, a function $f:X\to \mathbb{R}$ is approximately differentiable $\mu$-a.e. in a bounded measurable subset $E\subset X$ if and only if
\[
{\rm ap}\limsup_{y\to x}\frac{|f(y)-f(x)|}{d(x,y)}<\infty,\,\,\,\mu\text{-a.e. in }E.
\]
\end{cor}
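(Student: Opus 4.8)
The plan is to obtain the corollary immediately by splicing together the two equivalences just established, which share a common intermediate condition: the Lipschitz--Luzin approximation property of $f$ on $E$. No new argument is needed beyond checking that the hypotheses of the two cited theorems are in force.

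Concretely, I would first observe that the hypotheses of Theorem \ref{main} are met here (the space $(X,d,\mu)$ is complete, $\mu$ is doubling, $X$ carries an approximate differentiable structure, $E\subset X$ is measurable, and $f|_E$ is $\mu$-measurable), so that theorem gives the equivalence: $f$ is approximately differentiable $\mu$-a.e. in $E$, i.e. condition $(a)$, if and only if for every $\varepsilon>0$ there is a closed set $F\subset E$ with $\mu(E\setminus F)<\varepsilon$ on which $f$ is locally Lipschitz, i.e. condition $(b)$. Next, since $\mu$ is a doubling Borel measure, Theorem \ref{StepanovEqivalLuzin} applies to the same pair $(E,f|_E)$ and yields that
\[
{\rm ap}\limsup_{y\to x}\frac{|f(y)-f(x)|}{d(x,y)}<\infty \qquad \mu\text{-a.e. in }E
\]
holds if and only if the very same condition $(b)$ holds. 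Chaining these two biconditionals proves the corollary.

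I do not anticipate any genuine difficulty: the entire content of the corollary is the observation that the Luzin-type Lipschitz approximation property is a common reformulation of both the approximate differentiability of $f$ and the approximate Stepanov growth bound, so there is no real obstacle to single out. In the write-up it is worth noting only two bookkeeping points: that $f$ is assumed here to be defined on all of $X$ whereas the two ingredient theorems are phrased for functions on $E$, so one simply passes to the restriction $f|_E$; and that at $\mu$-a.e.\ point of $E$ — namely at density points of $E$ — the approximate $\limsup$ of $|f(y)-f(x)|/d(x,y)$ over $y\in X$ coincides with the quantity implicitly used in Theorem \ref{StepanovEqivalLuzin}, since the complement of $E$ contributes a set of density zero. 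The boundedness of $E$ stated in the corollary plays no role in either ingredient.
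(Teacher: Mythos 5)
Your proof is correct and takes exactly the route the paper intends: the corollary is stated there as an immediate consequence of Theorem \ref{main} (the equivalence (a)$\Leftrightarrow$(b)) and Theorem \ref{StepanovEqivalLuzin}, chained through the common Lipschitz--Luzin approximation condition, with no further argument given. Your two bookkeeping remarks (passing to $f|_E$, and the fact that at density points of $E$ the complement contributes a set of density zero to the approximate $\limsup$) are the only points worth recording, and you are right that the boundedness of $E$ is not actually needed.
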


A similar integral local growth condition is used in \cite{R} to guarantee $L^p$-differentiability of a function. It is also mentioned in \cite[Remark 3.4]{R} that the technique used in \cite[Theorem 3.3]{R} can be adapted for the notion of approximate differentiability.

As mentioned before, approximate differentiability is a much
weaker property than differentiability. However, if it is the case
that both, the approximate differential and Cheeger differential
exist almost everywhere, they should coincide. Therefore it is
interesting to search for additional conditions of global and
infinitesimal character which imply Cheeger differentiability
almost everywhere.

The following Stepanov differentiability theorem in metric measure
spaces was proved by Balogh-Rogovin-Z\"urcher in \cite{BRZ}.

\begin{thm}{\em{\bf \cite{BRZ}}}\label{ThBRZ}
Let $(X,d,\mu)$ be a metric space endowed with a doubling Borel measure $\mu$.
Assume that there is a strong measurable differentiable structure for
$(X,d,\mu)$.
Then a function $f:X\to\mathbb{R}$ is $\mu$-a.e. Cheeger
differentiable in the set
\begin{equation}\label{StepanovCondition}
\Big\{x:\,\limsup_{y\to x}\frac{|f(y)-f(x)|}{d(x,y)}<\infty\Big\}.
\end{equation}
\end{thm}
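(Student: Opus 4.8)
The plan is to deduce Theorem \ref{ThBRZ} from the approximate version via the characterization already established. First I would observe that the (genuine) limsup condition \eqref{StepanovCondition} trivially implies the approximate limsup condition \eqref{ApStepanovCondition} pointwise, so the set in \eqref{StepanovCondition} is contained (up to renaming) in the set where \eqref{ApStepanovCondition} holds. Restricting attention to that set, Theorem \ref{StepanovEqivalLuzin} gives, for every $\varepsilon>0$, a closed set $G\subset X$ with $\mu(X\setminus G)\le\varepsilon$ on which $f$ is locally Lipschitz; equivalently, by Theorem \ref{main}, $f$ admits a Luzin decomposition and $f$ is approximately differentiable $\mu$-a.e.\ on the set of \eqref{StepanovCondition}. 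So the real content is upgrading approximate differentiability to Cheeger differentiability at $\mu$-a.e.\ point of that set, which is where the genuine (non-approximate) growth hypothesis must be used.

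The key step is therefore the following local argument, carried out at a density point $x$ of one of the Lipschitz pieces $E_i$ from the Luzin decomposition that also lies in the set \eqref{StepanovCondition}. Extend $f_{|E_i}$ to a Lipschitz function $\tilde f_i$ on $X$ by McShane's theorem; by the definition of the strong measurable differentiable structure, $\tilde f_i$ is Cheeger differentiable at $\mu$-a.e.\ $x\in X$, with Cheeger differential $d^\alpha\tilde f_i(x)$, meaning the genuine limit in \eqref{diff} vanishes. I would then show that at a point $x$ which is simultaneously (i) a density point of $E_i$, (ii) a point of Cheeger differentiability of $\tilde f_i$, and (iii) in the set \eqref{StepanovCondition}, the function $f$ itself is Cheeger differentiable with the same differential $d^\alpha\tilde f_i(x)$. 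For $y\in E_i$ this is immediate since $f=\tilde f_i$ there. For $y\notin E_i$ near $x$, write
\[
\frac{|f(y)-f(x)-d^\alpha\tilde f_i(x)\cdot({\bf x}_\alpha(y)-{\bf x}_\alpha(x))|}{d(y,x)}\le \frac{|f(y)-f(x)|}{d(y,x)}+\frac{|d^\alpha\tilde f_i(x)\cdot({\bf x}_\alpha(y)-{\bf x}_\alpha(x))|}{d(y,x)},
\]
and use the growth bound from \eqref{StepanovCondition} together with the fact that ${\bf x}_\alpha$ is Lipschitz to see that this quotient stays bounded, say by a constant $M$, for all $y$ in a punctured neighborhood of $x$. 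Since $x$ is a density point of $E_i$, the set of ``bad'' points $y\notin E_i$ has density zero at $x$; combined with the genuine vanishing of the quotient along $E_i$, a standard density-point estimate (split $B(x,r)$ into its part in $E_i$, where the quotient is small, and its part outside, which is a fraction of the ball going to zero and on which the quotient is at most $M$) forces the genuine limit over all $y$ to be zero. Hence $f$ is Cheeger differentiable at $x$.

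Finally I would assemble: the set \eqref{StepanovCondition} differs by a $\mu$-null set from $\bigcup_i (E_i\cap \eqref{StepanovCondition})$, and on each such piece the set of points failing (i)–(iii) above is $\mu$-null (density points are full measure by the Lebesgue differentiation theorem for doubling measures, and Cheeger differentiability of the Lipschitz extension $\tilde f_i$ holds $\mu$-a.e.\ by Theorem \ref{Cheeger}). Therefore $f$ is Cheeger differentiable at $\mu$-a.e.\ point of \eqref{StepanovCondition}, which is the assertion of Theorem \ref{ThBRZ}. I expect the main obstacle to be the density-point estimate that converts ``genuine limit along $E_i$ is zero'' plus ``bounded off $E_i$'' plus ``$E_i$ has full density'' into ``genuine limit over all $y$ is zero'': one has to be a little careful that the growth bound $M$ from \eqref{StepanovCondition} is genuinely a pointwise (not merely approximate) bound on a fixed punctured neighborhood — which is exactly what the non-approximate hypothesis of Theorem \ref{ThBRZ} provides, and which is why the approximate version alone does not suffice to recover Cheeger differentiability.
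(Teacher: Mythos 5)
Your overall route --- reduce to the approximate Stepanov theorem and then upgrade approximate differentiability to Cheeger differentiability at density points of the Lipschitz pieces --- is exactly the alternative proof the paper has in mind (the paper states this theorem as a citation to \cite{BRZ} and only remarks that it can be recovered by combining Corollary \ref{StepanovTypeCharact} with a metric version of \cite[Lemma 3.1.5]{F}). However, your execution of the upgrade step contains a genuine error. The ``standard density-point estimate'' you invoke --- the difference quotient is bounded by $M$ on $B(x,r)\setminus E_i$, tends to $0$ along $E_i$, and $E_i$ has density $1$ at $x$, hence the genuine limit over all $y$ is $0$ --- is false: a set of measure density zero at $x$ can still accumulate at $x$, and on it you only know the bound $M$, so this argument recovers the \emph{approximate} limit (which you already had) and nothing more. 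Concretely, take $X=\R$, $C=\bigcup_n[2^{-n},2^{-n}+4^{-n}]$, $f=0$ on $\R\setminus C$ and $f(y)=|y|$ on $C$. At $x=0$ all three of your conditions (i)--(iii) hold with $E_i=\R\setminus C$ and $\tilde f_i\equiv 0$, yet $f$ is not differentiable at $0$, since the quotient equals $1$ along $y=2^{-n}$. (This does not contradict the theorem, which is only an a.e.\ statement, but it refutes your pointwise claim.)

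The correct upgrade (the content of \cite[Lemma 3.1.5]{F} and of Mal\'y's argument \cite{Ma} underlying \cite{BRZ}) needs two ingredients you omit. First, density $1$ of $E_i$ at $x$ combined with the doubling property gives a \emph{metric} statement: for every $\delta>0$ and all sufficiently small $\rho$, every $y\in B(x,\rho)$ admits $z\in E_i$ with $d(y,z)\le\delta\, d(x,y)$; one then estimates $|f(y)-f(x)-L\cdot({\bf x}_\alpha(y)-{\bf x}_\alpha(x))|$ by passing through such a $z$. Second --- and this is where your proof cannot be repaired without changing the decomposition --- the resulting term $|f(y)-f(z)|$ must be controlled by $k\,d(y,z)$ using the genuine growth bound \emph{at the point $z$}, valid for \emph{all} $y$ in a ball of uniform radius around $z$; the bound at $x$ alone only gives $|f(y)-f(z)|\le M(d(y,x)+d(z,x))\approx 2M\,d(x,y)$, which is not small compared to $d(x,y)$. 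Hence the pieces of the decomposition must be chosen inside the sets $S_k=\{z:\ |f(y)-f(z)|\le k\,d(y,z)\ \text{for all }y\in B(z,1/k)\}$, whose union covers the set \eqref{StepanovCondition}, rather than taken from the Luzin decomposition produced by Theorem \ref{StepanovEqivalLuzin}, whose pieces only control $|f(u)-f(v)|$ when \emph{both} $u,v$ lie in the piece --- exactly the deficiency exhibited by the counterexample above. With that modification your scheme does go through.
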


The proof of Theorem \ref{ThBRZ} is based on Maly's
proof of Stepanov's theorem in the Euclidean case, see \cite{Ma}.
Note that Stepanov differentiability theorem in $\R^n$ can be also
derived from its approximate analogue, see e.g. \cite{F}.
The same arguments work in metric spaces. Thus, one can obtain  an alternative proof for Theorem \ref{ThBRZ} combining Corollary \ref{StepanovTypeCharact} and the version of \cite[Lemma 3.1.5]{F} adapted to the metric measure setting.

\section{Differentiability properties of Sobolev functions}\label{differentiability}

In this section, we show that the approximate differentiability of Sobolev functions and $BV$ functions follows easily from the Stepanov-type characterization of approximate differentiability. The results in this section are basically known, but our approach gives another point of view.

First let us notice that if we have a Lipschitz-type pointwise estimate for a function, then we have an approximate local growth condition on $f$ as in Theorem \ref{StepanovEqivalLuzin}. Namely, if $f: X\to \R$ is a $\mu$-measurable function for which there exists a $\mu$-measurable function $g: X\to\R$ with
\begin{equation}\label{haj}
|f(x)-f(y)|\leq d(x,y)(g(x)+g(y))\qquad\text{$\mu$-a.e.},
\end{equation}
then
\begin{equation}\label{apsup}
{\rm ap}\limsup_{y\to x}\frac{|f(y)-f(x)|}{d(x,y)}<\infty,\,\,\,\mu\text{-a.e. in }X.
\end{equation}
Indeed, notice first that by Luzin theorem, $g$ is approximately continuous $\mu$-a.e. Divide both sides of the inequality by $d(x,y)$ and take approximate supremum limits when $y\to x$ to get \eqref{apsup}.

There are several generalizations of
classical Sobolev spaces to the setting of arbitrary metric measure
spaces.

Haj{\l}asz-Sobolev spaces $M^{1,p}(X)$ were defined in \cite{H}  as the functions in $L^p(X)$ for which there exists a positive function $g\in L^p(X)$  satisfying inequality \eqref{haj}. It follows from the discussion above that under the hypothesis of Theorem \ref{main}, Haj{\l}asz-Sobolev functions are approximately differentiable almost everywhere.

Using the notion of upper gradient (and
more generally weak upper gradient), Shanmugalingam
in \cite{Sh} introduced Newtonian spaces
$N^{1,p}(X)$ for $1\leq p\leq\infty$. A non-negative Borel function $g$ on $X$ is a \em $p$-weak upper
gradient \em of an extended real-valued function $f$ on $X$ if $
|f(\gamma(a))-f(\gamma(b))|\leq\int_{\gamma}g$
for all \ rectifiable curves $\gamma:[a,b]\to X$ except for a family of zero $p$-modulus. See \cite{Sh} for the definition of modulus of a family of curves.

Let $\widetilde{N}^{1,p}(X,d,\mu)$, where $1\leq p\leq\infty$, be the
class of all $p$-integrable functions on $X$ for which there
exists a $p$-weak upper gradient in $L^p(X)$. For
$f\in\widetilde{N}^{1,p}(X,d,\mu)$, we define
$$
\|f\|_{\widetilde{N}^{1,p}}:=\|f\|_{L^p}+\inf_{g}\|g\|_{L^p},
$$
where the infimum is taken over all $p$-weak upper gradients $g$ of
$f$. Now, we define in $\widetilde{N}^{1,p}(X,d,\mu)$  an equivalence
relation  by $f_1\sim f_2$ if and only if
$\|f_1-f_2\|_{\widetilde{N}^{1,p}}=0$. Then the space $N^{1,p}(X,d,\mu)=N^{1,p}(X)$
is defined as the quotient $\widetilde{N}^{1,p}(X,d,\mu)/\sim$.

As shown by Haj{\l}asz and Koskela \cite{HKo}, if we have a pair of functions $(f,g)$ that satisfies a $p$-Poincar\'e inequality  \eqref{poinc}, then we have the following pointwise estimate
\begin{equation}\label{maxhaj}
|f(x)-f(y)|\leq C d(x,y)[(M_{2\sigma d(x,y)}g^p(x))^{1/p}+(M_{2\sigma d(x,y)}g^p(y))^{1/p}],
\end{equation}
for $\mu$-a.e $x,y\in X$ and for some constants $C,\sigma>0$. Here $M_{R} f$ is defined by
\[
M_{R} f(x):=\sup_{0<r\leq R}\jint_{B(x,r)}|f(y)|d\mu(y),
\]
and notice that $M_{R} f(x)\leq Mf(x)$, where $Mf$ is the standard Hardy-Littlewood maximal function. Actually, if the space supports a doubling measure and a $p$-Poincar\'e inequality, with $p>1$, Newtonian spaces are characterized by  \eqref{maxhaj}. Moreover, under these hypotheses, Newtonian spaces coincide with Hajlasz-Sobolev spaces. If $X$ is only known to support an $\infty$-Poincar\'e inequality, the space of Lipschitz functions coincides with $N^{1,\infty}(X)$ (see \cite[Theorem 4.6]{DJS}).

Stepanov-type characterization can be also used to prove that $BV$ functions on metric spaces are approximately differentiable almost everywhere. See the work by Miranda \cite{M} for the corresponding definition of $BV$ functions. Very recently, Lahti and Tuominen \cite{LaTu} have shown that a similar pointwise estimate as in \eqref{maxhaj} holds for $BV$ functions assuming that the space supports a $1$-Poincar\'e inequality. Namely, if $f\in BV(X)$, there exists a constant $\sigma\geq 1$ such that
\begin{equation}
|f(x)-f(y)|\leq C d(x,y)[M_{2\sigma d(x,y),\|Df\|}(x)+M_{2\sigma d(x,y),\|Df\|}(y)],
\end{equation}
for $\mu$-a.e. $x,y\in X$, where $C$ is a constant depending only on the doubling constant and the constants involved in the Poincar\'e inequality. Here $M_{2\sigma d(x,y),\|Df\|}$ denotes the restricted maximal function of the measure $\|Df\|$, that is,
$$
M_{R,\|Df\|}(x):=\sup_{0<r\leq R}\frac{\|Df\|(B(x,r))}{\mu(B(x,r))},
$$
where $\|Df\|$ denotes the total variation of the measure $\mu$.

\begin{cor}\label{sobolev}
If $(X,d,\mu)$ is doubling and supports a $p$-Poincar\'e inequality, then Newtonian functions, Haj{\l}asz-Sobolev functions and $BV$ functions  are approximately differentiable $\mu$-a.e.
\end{cor}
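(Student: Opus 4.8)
The plan is to deduce Corollary \ref{sobolev} directly from the Stepanov-type characterization (Corollary \ref{StepanovTypeCharact}) by verifying, for each of the three classes of functions, that the approximate local growth condition
\[
{\rm ap}\limsup_{y\to x}\frac{|f(y)-f(x)|}{d(x,y)}<\infty
\]
holds for $\mu$-a.e.\ $x$. The general mechanism, already sketched in the paragraph before the statement, is that a pointwise Lipschitz-type estimate of the form $|f(x)-f(y)|\le d(x,y)(g(x)+g(y))$ (holding $\mu$-a.e.\ in $x,y$) for some $\mu$-measurable, a.e.\ finite $g$ implies the approximate growth bound: the function $g$ is $\mu$-measurable hence, by Luzin's theorem (or Theorem \ref{zeroLuzin}), approximately continuous $\mu$-a.e., so at a point $x$ of approximate continuity of $g$ one divides by $d(x,y)$ and passes to ${\rm ap}\limsup_{y\to x}$, obtaining the bound $2g(x)<\infty$. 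Note that $(X,d,\mu)$ being doubling and supporting a $p$-Poincar\'e inequality, it admits a strong measurable differentiable structure by Theorem \ref{Cheeger}, so the hypotheses of Corollary \ref{StepanovTypeCharact} are met (after the standard observation, Remark \ref{weakstrong}, that a doubling measure turns the structure into an approximate one as well).

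Next I would treat the three cases in turn. For Haj{\l}asz--Sobolev functions $f\in M^{1,p}(X)$ there is by definition a $g\in L^p(X)$, hence $\mu$-a.e.\ finite, with \eqref{haj}, so the mechanism above applies verbatim. For Newtonian functions $f\in N^{1,p}(X)$ one uses the Haj{\l}asz--Koskela pointwise estimate \eqref{maxhaj}: since the space is doubling and supports a $p$-Poincar\'e inequality, $f$ has a $p$-weak upper gradient $g\in L^p(X)$ and \eqref{maxhaj} holds with $g_\ast(x):=C(M_{R}g^p(x))^{1/p}$ in place of $g(x)+g(y)$; by the maximal function bound $M_R g^p\le M g^p$ and, when $p>1$, the Hardy--Littlewood maximal theorem in doubling spaces, $M g^p\in L^1_{\rm loc}$, so $g_\ast$ is $\mu$-a.e.\ finite and $\mu$-measurable, and the same argument gives the approximate growth bound. (Alternatively one may invoke that under these hypotheses $N^{1,p}=M^{1,p}$, reducing to the previous case.) Finally, for $f\in BV(X)$ one uses the Lahti--Tuominen estimate involving the restricted maximal function $M_{R,\|Df\|}$ of the total-variation measure; since $\|Df\|$ is a finite Radon measure, its maximal function is $\mu$-a.e.\ finite and measurable by the Besicovitch-type or Vitali covering argument valid in doubling spaces, and again the mechanism applies.

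In each case the conclusion then follows from Corollary \ref{StepanovTypeCharact} (applied on bounded measurable subsets $E$, which exhaust $X$, since $X$ is separable and can be covered by countably many balls), giving approximate differentiability $\mu$-a.e.\ on $X$. I expect the only real subtlety to be the finiteness of the maximal-type functions in the Newtonian and $BV$ cases: this is where one needs the doubling hypothesis (to have the Hardy--Littlewood maximal inequality and hence $M g^p<\infty$ a.e.\ for $p>1$, and to control the maximal function of a finite measure), and where one must be slightly careful that the pointwise estimates \eqref{maxhaj} and its $BV$ analogue hold only for $\mu$-a.e.\ pair $(x,y)$ — so one restricts to the full-measure set of points $x$ at which the estimate holds for a.e.\ $y$ and at which the relevant majorant is finite and approximately continuous. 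Everything else is a routine application of the already-established characterization.
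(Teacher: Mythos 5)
Your proposal follows the paper's own argument essentially verbatim: the paper also deduces the corollary from the Stepanov-type characterization (Corollary \ref{StepanovTypeCharact}) by verifying the approximate growth condition through the pointwise estimates \eqref{haj} and \eqref{maxhaj} and the Lahti--Tuominen $BV$ analogue, with the approximate continuity of the majorant supplying the finite $\mathrm{ap}\limsup$. The only small inaccuracy is your appeal to $p>1$ and $Mg^p\in L^1_{\mathrm{loc}}$ in the Newtonian case: since $g\in L^p$ gives $g^p\in L^1$, the $\mu$-a.e.\ finiteness of $Mg^p$ already follows from the weak $(1,1)$ maximal inequality in doubling spaces for every $p\ge 1$, so no restriction on $p$ is needed there.
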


Notice that the assumption of a Poincar\'e inequality cannot be dropped from the hypothesis in the Newtonian case or in the $BV$ case. For example, if the space has no rectifiable curves, except for the constant ones, then $N^{1,p}(X)=L^p(X)$ or $BV(X)=L^1(X)$ and therefore it could happen that a function in $N^{1,p}(X)$ is nowhere differentiable, nor approximately differentiable. On the other hand, when one uses Haj{\l}asz approach, it is enough to assume that the space admits a strong measurable differentiable structure to reach the conclusion.

These result can be also deduced from existing results in literature. Bj\"{o}rn \cite{Bj} has shown that if $(X,d,\mu)$ is doubling and supports a $p$-Poincar\'e inequality, then for each function $f\in N^{1,p}(X)$ and $\mu$-a.e. $x\in X$,
$$
\limsup_{r\to 0}\frac{1}{r}\jint_{B(x,r)}|f(y)-f(x)-df^{\alpha}(x)\cdot({\bf x}_{\alpha}(y)-{\bf
x}_{\alpha}(x))|d\mu(y)=0,
$$
in other words, $f$ is $L^1$-differentiable.
For uniformly perfect spaces equipped with a doubling measure, $L^1$-differentiability implies approximate differentiability. For a proof of this fact see \cite[Prop 3.4]{Ke2}. Notice that a space supporting a Poincar\'e inequality is connected and thus also uniformly perfect.


In \cite{R} it is proved that $BV$ functions are $L^1$-differentiable $\mu$-a.e. As a direct consequence we deduce as well that $BV$ functions are approximately differentiable $\mu$-a.e.

Notice that the approximate differentiability is a weaker notion than $L^p$-differentiability. In particular, the definition of the approximate differentiability does not involve any integrability assumptions.

\section{Approximate differentiability of the maximal function}\label{maxim}

Haj{\l}asz and Mal\'y  proved in \cite{HM} that, in the case of $X=\R^{n}$, approximate differentiability is preserved under the action of
\em
Hardy-Littlewood maximal operator \em
\[
Mf(x):=\sup_{r>0}\dashint_{B(x,r)}|f(y)|\,dy, \quad x\in X.
\]
It was recently shown by H. Luiro \cite{L} that, in the Euclidean case,
differentiability almost everywhere is also preserved under the
action of the maximal function.

On the other hand, in the setting of metric spaces endowed with a doubling measure, the maximal operator
does not preserve the regularity of a function in the same manner as in the Euclidean case.
Kinnunen proved in \cite{Ki}
that the Hardy-Littlewood maximal operator is bounded in $W^{1,p}(\R^n)$ for $1<p\leq\infty$. Notice that the case $p=\infty$ corresponds to the space of Lipschitz functions. On the other hand,
Buckley \cite{Bu} has shown that for a metric space with a doubling measure,
the maximal operator may not preserve Lipschitz and H\"older spaces.
In order to have a maximal function which preserves, for example, the Sobolev spaces on metric spaces, Kinnunen and Latvala \cite{KL}
constructed a maximal function based on discrete convolution (see also \cite{AK} and \cite{AK2}).

In the next theorem, we will show that the discrete maximal operator preserves also approximate differentiability. First, we define the discrete maximal operator.
Fix $r>0$. Let $B_{i}=B(x_{i},r)$, $i=1,2,\ldots$, be a collection of balls such that they cover $X$ and the balls $B(x_{i},r/2)$, $i=1,2,\ldots$, are pairwise disjoint. Let $\psi_{i}$ be a partition of unity subordinate to the covering $B_{i}$, $i=1,2,\ldots$, i.e. $0\leq \psi_{i}\leq 1$, $\supp \psi_{i}\subset B(x_{i},6r)$, $\psi_{i}\geq 1/C$ in $B(x_{i},3r)$, $\psi_{i}$ is Lipschitz with constant $L/r$ and $\sum_{i} \psi_{i} =1$. Then we define the discrete convolution of $f\in L^{1}_{\text{loc}}(X)$ by setting
\[
f_{r}(x)=\sum_{i=1}^{\infty}\psi_{i}(x)f_{B(x_{i},3r)}
\]
Let $r_{j}$ be an enumeration of the positive rationals. We define the discrete maximal function (which depends on the chosen covering)
\[
M^{*}f(x)=\sup_{j}|f|_{r_{j}}(x).
\]
Now we can state our theorem. Notice that the theorem holds for arbitrarily chosen covering definig $M^{*}f$.

\begin{thm}\label{maxdiff}
Let $(X,d,\mu)$ be a metric space equipped with a doubling measure $\mu$. Assume also that $X$ supports an approximate differentiable structure.
If $f\in L^1(X)$ is approximately differentiable $\mu$-a.e, then $M^{*}f$ is approximately differentiable $\mu$-a.e..
\end{thm}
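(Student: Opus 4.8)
The natural strategy is to reduce to the Whitney-type characterization, Theorem \ref{main}: to show that $M^*f$ is approximately differentiable $\mu$-a.e., it suffices to exhibit, for each $\veps>0$, a closed set $F\subset X$ with $\mu(X\setminus F)<\veps$ on which $M^*f$ is locally Lipschitz. Since $f$ is approximately differentiable $\mu$-a.e., Theorem \ref{main} gives such a set for $f$; but one has to promote a Luzin-type regularity of $f$ to a Luzin-type regularity of $M^*f$, and this is where the structure of the discrete maximal operator enters.

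The key point is that each discrete convolution $f_{r}$ is already a Lipschitz function when $f\in L^1_{\mathrm{loc}}(X)$: indeed $f_r=\sum_i\psi_i(x)f_{B(x_i,3r)}$ is a locally finite sum of Lipschitz functions $\psi_i$ with Lipschitz constant $L/r$, multiplied by the constants $f_{B(x_i,3r)}$, so on any ball $f_r$ is Lipschitz (with a constant depending on $r$, the doubling constant, and $\|f\|_{L^1}$ of a neighbourhood). The operator $M^*f=\sup_j |f|_{r_j}$ is a pointwise supremum of such functions, so it is lower semicontinuous but not obviously Lipschitz. The classical idea of Kinnunen--Latvala, which I would follow, is that the discrete maximal function inherits Haj{\l}asz-type pointwise gradient bounds: one shows there is a function $g\in L^1_{\mathrm{loc}}(X)$, controlled in terms of the maximal function of $f$ and of a Haj{\l}asz gradient of $f$, such that $|M^*f(x)-M^*f(y)|\le d(x,y)(g(x)+g(y))$ $\mu$-a.e. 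Then by the observation recorded just before Corollary \ref{sobolev} (applied to $M^*f$ in place of $f$), the approximate local growth condition \eqref{ApStepanovCondition} holds for $M^*f$ $\mu$-a.e., and Corollary \ref{StepanovTypeCharact} (or directly Theorem \ref{StepanovEqivalLuzin} together with Theorem \ref{main}) yields approximate differentiability of $M^*f$ $\mu$-a.e.

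Concretely, the steps are: (1) reduce (by exhausting $X$ with balls, as in the proof of Theorem \ref{main}) to establishing the Luzin/Lipschitz regularity locally; (2) using the fact that $f$ admits a Luzin decomposition with Lipschitz pieces, produce on a set of almost full measure a local Haj{\l}asz-type inequality $|f(x)-f(y)|\le d(x,y)(h(x)+h(y))$ with $h$ locally integrable — here one uses that on the closed set $F$ from Theorem \ref{main}, $f$ is locally Lipschitz, hence the restriction has a bounded pointwise Lipschitz constant, which controls difference quotients between points of $F$, while bad behaviour is confined to a set of small measure and hence has density zero at almost every good point; (3) transfer this to $M^*f$ by the discrete-maximal-operator estimates, obtaining $|M^*f(x)-M^*f(y)|\le d(x,y)(g(x)+g(y))$ for $\mu$-a.e.\ $x,y$ with $g$ built from $M h$ and $M(\lip f)$ (where $\lip f$ is the pointwise Lipschitz constant of $f$ on its good pieces), invoking the maximal function theorem / weak type estimate to see $g<\infty$ $\mu$-a.e.; (4) conclude via the pre-Corollary \ref{sobolev} remark and Corollary \ref{StepanovTypeCharact}.

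The main obstacle is step (3): showing that the discrete maximal operator transforms a Haj{\l}asz-type pointwise inequality for $f$ into one for $M^*f$, with an $L^1_{\mathrm{loc}}$ (or at least a.e.\ finite) companion function. In the Sobolev setting $W^{1,p}$ with $p>1$ this is Kinnunen--Latvala's boundedness theorem, but here we only know $f\in L^1(X)$ is approximately differentiable, which is much weaker than membership in any Sobolev class; so one cannot directly cite boundedness of $M^*$ on a Sobolev space. The resolution I expect is to work locally on the good set $F$ of Theorem \ref{main}, where $f$ \emph{is} genuinely (locally) Lipschitz, hence locally in $N^{1,\infty}$ with a bounded upper gradient; apply the Kinnunen--Latvala-type estimate there to control oscillation of $M^*f$ between two good points by $d(x,y)$ times maximal functions of the local Lipschitz constant; and handle the complement of $F$ via the density-zero argument, noting that the error terms coming from balls $B(x_i,3r)$ that meet $X\setminus F$ contribute only to a set of small measure and therefore do not affect the approximate limit. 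Putting the finiteness of the relevant maximal functions $\mu$-a.e.\ together with the density-point argument is the technical heart, but once the pointwise Haj{\l}asz-type inequality for $M^*f$ on a set of almost full measure is in hand, the conclusion follows mechanically from the results already proved in this paper.
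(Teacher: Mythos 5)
Your reduction to the Stepanov-type characterization is the right target, but the technical heart of your plan --- steps (2)--(3), producing a pointwise Haj{\l}asz-type inequality first for $f$ and then for the full maximal function $M^{*}f$, with an a.e.\ finite companion --- does not go through, and you have identified but not resolved the obstruction. First, an approximately differentiable $L^1$ function need not satisfy $|f(x)-f(y)|\le d(x,y)\,(h(x)+h(y))$ for a.e.\ $x,y$ with any a.e.\ finite $h$: for $f=\chi_{E}$ with $E$ and its complement interleaved at all scales on a set of positive measure, any such $h$ is forced to be infinite on a set of positive measure, even though $f$ is locally constant on each piece of its Luzin decomposition. Second, even where such an inequality is available, the Kinnunen--Latvala transfer to $M^{*}f$ must control all scales $r>0$; the partition functions $\psi_i$ have Lipschitz constant $L/r$, which blows up as $r\to 0$, and what compensates in the Sobolev setting is an integrable upper gradient fed through the Poincar\'e inequality --- unavailable here, since the Lipschitz constants of $f$ on the pieces of its decomposition need not be locally integrable, so their maximal function can be identically $+\infty$. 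Your fallback of working on the closed set $F$ where $f$ is locally Lipschitz and dismissing the complement by a density argument does not repair this: $M^{*}f(x)$ involves averages of $|f|$ over balls that meet $X\setminus F$ at every scale, where $f$ is controlled only in $L^1$, so the value of $M^{*}f$ is contaminated at every point of $F$, not merely on a set of small measure.

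The two missing ideas are a truncation and a dichotomy. The paper first proves the Haj{\l}asz-type estimate only for the truncated operator $M^{*}_{\veps}f=\sup_{r_j>\veps}|f|_{r_j}$, where the constants $L/r\le L/\veps$ are uniformly bounded and the companion function can be taken to be $\tilde Q\, M^{*}_{\veps}f/\veps$, finite a.e.\ by the weak-type estimate; this uses only $f\in L^1(X)$ and no regularity of $f$ whatsoever (Lemma \ref{trunlip}). It then splits $X$ into $\{x: M^{*}f(x)=|f(x)|\}$, where $M^{*}f$ inherits approximate differentiability directly from $|f|$ --- the only place where the hypothesis on $f$ enters --- and $\{x: M^{*}f(x)>|f(x)|\}$, where at every Lebesgue point the defining supremum is realized at radii bounded away from zero, so that $M^{*}f$ coincides with some $M^{*}_{1/k}f$ on each of the measurable sets $\{x: M^{*}f(x)=M^{*}_{1/k}f(x)\}$ and the truncated case applies. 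Without this truncation and dichotomy your argument cannot be completed as written.
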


The proof follows the ideas used in \cite{HM}. First, we consider the restricted maximal function $Mf_\veps$, $\veps>0$,
defined by the formula
\[
M^{*}_\veps f(x) :=  \sup_{r_{j}>\veps}|f|_{r_{j}}(x) .
\]

\begin{lem}
\label{trunlip}
Let $(X,d)$ be a metric measure space equipped with a doubling measure $\mu$. Assume also that $X$ supports an approximate differentiable structure.
If $f\in L^1(X)$ then $M^{*}_\veps f$, $\veps>0$, is approximately differentiable $\mu$-a.e. in $X$.
\end{lem}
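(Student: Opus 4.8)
The strategy has two steps. \textbf{Step 1:} show that $M^{*}_\veps f$ is real-valued and locally Lipschitz on $X$. \textbf{Step 2:} deduce approximate differentiability $\mu$-a.e.\ from the differentiable structure, exactly as in the implication $(c)\Longrightarrow(a)$ of Theorem \ref{main}: cover $X$ by countably many open balls $B_n$, extend the Lipschitz function $M^{*}_\veps f|_{\overline{B_n}}$ to a Lipschitz $h_n\colon X\to\R$ by McShane's theorem, use that each $h_n$ is approximately differentiable $\mu$-a.e.\ by the definition of the approximate differentiable structure, and note that $M^{*}_\veps f=h_n$ on the open set $B_n$, every point of which is a density point of $B_n$; this gives approximate differentiability $\mu$-a.e.\ on each $B_n$ and hence on $X$. (Since local Lipschitz continuity already gives condition $(b)$ of Theorem \ref{main} with $E=F=X$, one could also just quote Theorem \ref{main}; the argument above has the mild advantage of not using completeness of $X$, which is not assumed here.)

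So everything reduces to Step 1. For finiteness: in $|f|_{r_j}(x)=\sum_i\psi_i(x)\,|f|_{B(x_i,3r_j)}$ only the indices $i$ with $x_i\in B(x,6r_j)$ matter, and there are at most $N_0=N_0(C_\mu)$ of these by the bounded-overlap property of the partition of unity; a short doubling computation gives $\mu(B(x_i,3r_j))\ge c(C_\mu)\mu(B(x,r_j))$, hence $|f|_{r_j}(x)\le C(C_\mu)\|f\|_{L^1(X)}/\mu(B(x,r_j))\le C(C_\mu)\|f\|_{L^1(X)}/\mu(B(x,\veps))<\infty$ for every $r_j>\veps$. Thus $M^{*}_\veps f$ is finite at every point and bounded on every ball.

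For local Lipschitz continuity, fix a ball $B_0=B(x_0,R)$ and $x,y\in B_0$ with $d(x,y)<\veps$. For each $r_j>\veps$ pick $k$ with $x\in B(x_k,r_j)$; since $\sum_i(\psi_i(x)-\psi_i(y))=0$ we may write $|f|_{r_j}(x)-|f|_{r_j}(y)=\sum_i(\psi_i(x)-\psi_i(y))\bigl(|f|_{B(x_i,3r_j)}-|f|_{B(x_k,3r_j)}\bigr)$, again with at most $N_0$ nonzero terms. Using $|\psi_i(x)-\psi_i(y)|\le(L/r_j)\,d(x,y)$ and, by doubling, $\bigl||f|_{B(x_i,3r_j)}-|f|_{B(x_k,3r_j)}\bigr|\le C(C_\mu)\|f\|_{L^1(X)}/\mu(B(x_k,Cr_j))$, one gets $\bigl||f|_{r_j}(x)-|f|_{r_j}(y)\bigr|\le C\|f\|_{L^1(X)}\,d(x,y)\big/\bigl(r_j\,\mu(B(x_k,Cr_j))\bigr)$. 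The denominator is bounded below uniformly in $j$: for $\veps<r_j\le 2R$ one uses $r_j>\veps$ and $\mu(B(x_k,Cr_j))\ge\mu(B(x_k,C\veps))\ge c(C_\mu,R/\veps)\,\mu(B(x_0,3R))>0$ (note $x_k\in B(x_0,3R)$), while for $r_j>2R$ one uses $B(x_k,Cr_j)\supset B(x_0,2R)$ and $r_j>2R$. Taking $\sup_{r_j>\veps}$ and using $|\sup_j a_j-\sup_j b_j|\le\sup_j|a_j-b_j|$ together with the finiteness from Step 1 yields $|M^{*}_\veps f(x)-M^{*}_\veps f(y)|\le K(x_0,R,\veps,C_\mu)\,\|f\|_{L^1(X)}\,d(x,y)$ whenever $d(x,y)<\veps$; combined with the boundedness of $M^{*}_\veps f$ on $B_0$ this gives Lipschitz continuity on $B_0$, hence $M^{*}_\veps f$ is locally Lipschitz on $X$.

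The main obstacle is precisely this uniform-in-$j$ lower bound on $r_j\,\mu(B(x_k,Cr_j))$: the crude estimate $L/r_j$ for the oscillation of the partition of unity must be compensated by the smallness of the oscillation of the averages $|f|_{B(x_i,3r_j)}$, and this is where the two hypotheses of the lemma enter essentially — the restriction $r_j>\veps$ handles small and moderate scales, and the global integrability $f\in L^1(X)$ makes those averages negligible at very large scales.
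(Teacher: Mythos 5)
Your proposal is correct, but the second half takes a genuinely different route from the paper. The core computation is shared: both arguments expand $|f|_{r}(x)-|f|_{r}(y)$ over the boundedly many indices $i$ with $x$ or $y$ in $\supp\psi_i$, use the Lipschitz bound $L/r\le L/\veps$ on the partition of unity, and then must control the averages $|f|_{B(x_i,3r)}$. You control them by the global bound $\|f\|_{L^1(X)}/\mu(B(x_k,Cr_j))$, prove that $r_j\,\mu(B(x_k,Cr_j))$ is bounded below locally uniformly in $j$, and thereby obtain that $M^{*}_\veps f$ is genuinely (locally) Lipschitz on $X$; you then conclude by McShane extension plus the definition of the approximate differentiable structure. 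The paper instead bounds $|f|_{B(x_i,3r)}\le C M^{*}_\veps f(x)$ (via the comparison in \cite{KL}), which yields only the weaker pointwise Haj{\l}asz-type estimate \eqref{lipmaxtrun1} with the a.e.\ finite function $M^{*}_\veps f$ on the right-hand side; it then combines measurability (hence approximate continuity by Luzin) with the finiteness of ${\rm ap}\limsup_{y\to x}|M^{*}_\veps f(y)-M^{*}_\veps f(x)|/d(x,y)$ and invokes the Stepanov-type characterization, Corollary \ref{StepanovTypeCharact}. What each buys: your version gives a stronger intermediate conclusion (true local Lipschitz continuity, not just an a.e.\ pointwise estimate) and, as you note, sidesteps the completeness and boundedness hypotheses that Corollary \ref{StepanovTypeCharact} nominally imports from Theorem \ref{main}; the paper's version is shorter, reuses the machinery already developed in Section \ref{stepasection}, and its estimate \eqref{lipmaxtrun1} is exactly of the Haj{\l}asz form \eqref{haj}, which is the conceptual point the authors want to emphasize. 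One cosmetic remark: the subtraction of $|f|_{B(x_k,3r_j)}$ via $\sum_i(\psi_i(x)-\psi_i(y))=0$ plays no real role in your estimate, since you bound each average separately rather than exploiting cancellation; it could be dropped without loss.
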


\begin{proof}
We start by proving that for some constant $\tilde Q$, which depends only on the doubling constant, the following inequality holds
\begin{equation}
\label{lipmaxtrun1}
|M^{*}_\veps f(x) - M^{*}_\veps f(y)| \le \frac{\tilde Q}{\veps}d(x,y)(M^{*}_\veps f(x) + M^{*}_\veps f(y))\qquad \mbox{ for}\, \mu-{\text {a.e.}}\, x,y\in X.
\end{equation}
Notice first, that the claim clearly holds if $d(x,y)\geq\veps$, so we may assume that $d(x,y)<\veps$.
Fix $r>\varepsilon$. Let $I$ be the set of indexes $i$ such that $x$ or $y$ belong to $B(x_{i},6r)$. The doubling property implies that $|I|\leq C$ with a constant that only depends on the doubling constant. We also have
\[
|f|_{B(x_{i},3r)}\leq CM^{*}_{\veps}f(x)
\]
with a constant only depending on the doubling constant, see for example the proof of Lemma 3.1. in \cite{KL}. Thus we can conclude that
\[
\begin{split}
\left||f|_{r}(x)-|f|_{r}(y)\right|=&\big|\sum_{i\in I}(\psi_{i}(x)-\psi_{i}(y))|f|_{B(x_{i},3r)}\big|\\
\leq &\,C\, \frac{L}{r} d(x,y) M^{*}_{\veps}f(x)\\
\leq &\, C\,  \frac{L}{\veps} d(x,y) (M^{*}_{\veps}f(x) +M^{*}_{\veps}f(y)).
\end{split}
\]
By taking the supremum over all $r_{j}\geq \veps$, we obtain \eqref{lipmaxtrun1}.

Now it is enough to notice that the restricted maximal function is $\mu$-measurable,
hence by Luzin theorem it is approximately continuous and by \eqref{lipmaxtrun1}
\[
{\text{ap}}\limsup_{y \to x}\frac{|M^{*}_\veps f(x) - M^{*}_\veps f(y)|}{d(x,y)}
\le \frac{\tilde Q}{\veps}2 M^{*}_\veps f(x) < \infty \quad \mu\mbox{-a.e.\, in}\, X.
\]
Using Stepanov-type characterization (see Corollary \ref{StepanovTypeCharact}), we obtain that $M^{*}_\veps f$
is approximately differentiable $\mu$-a.e. in $X$.
\end{proof}

It would be interesting to know, whether Theorem \ref{maxdiff} holds for the standard Hardy-Littlewood maximal function as well. In the metric space setting this would require a totally different proof since estimates like \eqref{lipmaxtrun1} do not hold in spaces where the measure of balls does not behave nicely. Even with annular decay property, only H\"older type estimates are available.

Now we are ready to prove Theorem \ref{maxdiff}
\begin{proof}[Proof of Theorem \ref{maxdiff}]
First, we split the space into two parts
\[
X = \{x \ : \ M^{*}f(x) > |f|(x) \} \cup \{x \ : \ M^{*}f(x) = |f|(x) \}.
\]

Observe that 
if $f\in L^{1}(X)$
is approximately differentiable function at $\mu$-almost every point in $X$ then $|f|$ is approximately differentiable $\mu$-a.e. in $X$ as well.
This fact easily follows, for example, from Theorem \ref{main} on Whitney-type characterization of approximate differentiability.


Thus, the maximal function $M^*f$ is approximate differentiable $\mu$-a.e. on the second set.
Note also that since $\mu$-almost every point of $X$ is a Lebesgue point of $f$ (see e.g. \cite{He}, Theorem 1.8),
it is enough to show that $M^*f$ is approximately differentiable almost everywhere on the set
\[
A := \{x \ : M^{*} f(x) > |f(x)|  \ \textrm{and}  \  x  \ \textrm{is a Lebesgue point of} \ f \}.
\]
If $x \in A$ there exists a sequence $\{r_n\}_{n=1}^\infty$ such that
\[
\lim_{n\rightarrow \infty}|f|_{r_{n}}(x) =M^{*}f(x).
\]
The sequence $r_n$ is bounded (since $M^{*}f(x)>0$ and $f\in L^1(X)$) and we can find a convergent subsequence. Let us denote its limit by $r$.
Note that $r>0$, otherwise $M^{*}f(x)=|f|(x)$. Thus for each $x \in A$ there exists $k\in \mathbb{N}$ such that
$M^{*}f(x) =M^{*}_{1/k}f(x)$ and
\[
A\subset \bigcup_{k=1}^\infty \{x \ : \ M^{*}f(x) = M_{1/k}^{*}f(x) \}.
\]
By Lemma \ref{trunlip}, each maximal function $M^{*}_{1/k}f(x)$, $k\in \mathbb{N}$, is approximately differentiable $\mu$-a.e. in $X$.
and, since sets $\{x : \ M^{*}f(x) = M^{*}_{1/k}f(x) \}$ are measurable, $M^{*}f$
is approximately differentiable $\mu$-a.e. in $A$.
\end{proof}

\centerline{\sc Acknowledgements}
Part of this research was
conducted while the first and fourth author visited Aalto University; they wish
to thank the institution for its kind hospitality.
The authors would also like to thank Prof. Juha Kinnunen  for giving
comments and suggestions which have been helpful in improving the manuscript. The first author is partially supported by grant MTM2009-07848 (Spain).
The second, the third and the fourth authors are supported by the Academy of Finland (grants 252293, 250403 and 138738). The fourth author is partially supported by MNiSW Grant no N N201
397737, \emph{Nonlinear partial differential equations: geometric and variational problems}.

\end{document}